\newtheorem{satz}{Theorem}
\newtheorem{lemma}[satz]{Lemma}
\newtheorem{theorem}[satz]{Theorem}
\newtheorem{proposition}[satz]{Proposition}
\newtheorem{notation}[satz]{Notation}
\newtheorem{definition}[satz]{Definition}
\begin{document}

\begin{center}
{\bf\large Maximal Subsemigroups containing a particular semigroup }
\end{center}

\begin{center}
J$\ddot{o}$rg Koppitz$^{\dag}$, Tiwadee Musunthia$^{\sharp}$\\
\end{center}
$$\begin{array}{cc}
\text{Potsdam University,}^{\dag} & \hspace*{1cm}\text{Department of Mathematics}^{\sharp}\\
\text{Institute of Mathematics}&\hspace*{1cm}\text{Silpakorn University}\\
\text{Am Neuen Palais, D-14415}&\hspace*{1cm}\text{Nakorn Pathom, Thailand 73000}\\
\text{ Potsdam, Germany}& \hspace*{1cm}\text{Centre of Excellence in Mathematics}\\
& \hspace*{1cm}\text{272 Rama VI Road, Bangkok}\\
& \hspace*{1cm}\text{10400, Thailand}\\
\text{e-mail: koppitz@rz.uni-potsdam.de} &
\hspace*{1cm}\text{e-mail: tiwadee$\_$m@hotmail.com}
\end{array}$$

\begin{abstract}
We study maximal subsemigroups of the monoid $T(X)$ of all full
transformations on the set $X=N$ of natural numbers containing a
given subsemigroup $W$ of $T(X)$ where each element of a given set
$U$ is a generator of $T(X)$ modulo $W$. This note continue the
study of maximal subsemigroups on the monoid of all full
transformations on a infinite set.
\end{abstract}

In this note, we want to continue the study of maximal subsemigroups
of the semigroup $T(X)$ of all full transformations on an infinite
set, in particular, for the case $X$ is countable. The maximal
subsemigroups of $T(X)$ containing the symmetric group $Sym(X)$ of
all bijective mappings on an infinite set are already known. They
were determined by L. Heindorf ($X$ is countable) and by M. Pinsker
(any infinite set $X$) characterizing maximal clones
(\cite{[5]},\cite{[8]}).

The setwise stabilizer of any finite set $Y\subseteq X$ under
$Sym(X)$ is a subgroup of $Sym(X)$. In \cite{[3]}, the authors
determine the maximal subsemigroups of $T(X)$ containing the setwise
stabilizer of any finite set $Y\subseteq X$ under $Sym(X)$. For a
finite partition of $X$, one can also consider the (almost)
stabilizer. They form subsemigroups of $Sym(X)$ and in \cite{[3]},
the maximal subsemigroups of $T(X)$ containing such a subgroup are
determined. Also in \cite{[3]}, the maximal subsemigroups containing
the stabilizer of any uniform ultrafilter on $X$, which forms also a
group, are determined.

In the present note, we consider a countable infinite set $X$ and
characterize for a given subsemigroup $W$ of $T(X)$ and a given set
$U\subseteq T(X)$, where any element of $U$ is a generator modulo
$W$ (see \cite{[6]}), the maximal subsemigroups of $T(X)$ containing
$W$. As a consequence of this result, we obtain all maximal
subsemigroups of $T(X)$ containing $T(X)\setminus S$, where $S$ is a
given maximal subsemigroup of $T(X)$ containing $Sym(X)$.

\begin{notation}
Let $M\subseteq P(T(X))$ and let $J(M)$ be the set of all
$A\subseteq T(X)$ with

$A\subseteq \bigcup M :=\{a|\exists m\in M\wedge a\in m\}$,

$A\cap m\neq \emptyset $ for all $m\in M$, and

$\forall \alpha \in A\exists m\in M$ with $A\cap m=\{\alpha \}$.
\end{notation}

\begin{definition}
Let $U\subseteq T(X)$ and $W\leq T(X)$. Then we put

$Gen(U):=\{A\subseteq T(X)\mid A$ is finite and $\left\langle
A\right\rangle \cap U\neq \emptyset \}$ and

$\mathcal{H}(U,W):=\{A\subseteq T(X)\setminus W\mid A\in
J(Gen(U))\}$.
\end{definition}

\begin{theorem}
Let $W\leq S\leq T(X)$ and $U\subseteq T(X)$ with $U\cap W=\emptyset
$ such that $\left\langle W,\alpha \right\rangle =T(X)$ for all
$\alpha \in U$. Then the following statements are
equivalent:\newline (i) $S$ is maximal.\newline (ii) There is a
$H\in \mathcal{H}(U,W)$ with $S=T(X)\setminus H$. \label{thm1}
\end{theorem}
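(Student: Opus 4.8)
The plan is to isolate one elementary fact about subsemigroups that contain $W$ and then read off both implications from it. The fact: for every subsemigroup $V$ with $W\leq V\leq T(X)$, one has $V=T(X)$ if and only if $A\subseteq V$ for some $A\in Gen(U)$. The nontrivial direction is immediate — if $A\subseteq V$ with $A\in Gen(U)$, then $\langle A\rangle\subseteq V$, so choosing $\alpha\in\langle A\rangle\cap U$ yields $T(X)=\langle W,\alpha\rangle\subseteq V$ — while the converse holds because $\{\alpha\}\in Gen(U)$ for every $\alpha\in U$, so $Gen(U)\neq\emptyset$ (this tacitly uses $U\neq\emptyset$, which must be assumed for the hypothesis $\langle W,\alpha\rangle=T(X)$ to have content). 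Dually: a subsemigroup $V$ with $W\leq V$ is proper exactly when $T(X)\setminus V$ meets every member of $Gen(U)$, i.e.\ satisfies the second clause in the definition of $J(Gen(U))$; and $T(X)\setminus V\subseteq T(X)\setminus W$ always.

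For (i)$\Rightarrow$(ii) I would put $H:=T(X)\setminus S$. By the previous paragraph $H\subseteq T(X)\setminus W$ and $H\cap A\neq\emptyset$ for all $A\in Gen(U)$. It remains to show that for each $\beta\in H$ there is $A_\beta\in Gen(U)$ with $H\cap A_\beta=\{\beta\}$; this is where maximality of $S$ is used, and it also gives the first clause in the definition of $J$ for free, since $\beta\in A_\beta$. As $\beta\notin S$ and $S$ is maximal, $\langle S\cup\{\beta\}\rangle=T(X)$, so a fixed $\alpha\in U$ can be written as a finite product of elements of $S\cup\{\beta\}$; letting $F$ be the set of those factors lying in $S$ and setting $A_\beta:=F\cup\{\beta\}$, we obtain a finite set with $\alpha\in\langle A_\beta\rangle$, hence $A_\beta\in Gen(U)$, and $A_\beta\cap H=\{\beta\}$ because $F\subseteq S$. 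Thus $H\in J(Gen(U))$, so $H\in\mathcal H(U,W)$ and $S=T(X)\setminus H$.

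For (ii)$\Rightarrow$(i), assume $S=T(X)\setminus H$ with $H\in\mathcal H(U,W)$. First $S\neq T(X)$: since $\{\alpha\}\in Gen(U)$ for $\alpha\in U$ and $H$ meets every member of $Gen(U)$, we get $H\neq\emptyset$. Now let $S'$ be any subsemigroup with $S\subsetneq S'\subseteq T(X)$, choose $\beta\in S'\setminus S\subseteq H$, and apply the third clause in the definition of $J(Gen(U))$ to obtain $A\in Gen(U)$ with $H\cap A=\{\beta\}$. Then $A\subseteq(T(X)\setminus H)\cup\{\beta\}=S\cup\{\beta\}\subseteq S'$, and since $W\leq S\leq S'$ the elementary fact above (with $V=S'$) forces $S'=T(X)$. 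Hence $S$ is maximal.

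I do not expect a genuine obstacle here: the whole argument is purely semigroup-theoretic and uses neither the structure of $T(X)$ nor the countability of $X$. The only mildly delicate points are the extraction of the finite witness $A_\beta$ from an arbitrary word representing $\alpha$ in (i)$\Rightarrow$(ii), and excluding the degenerate case $U=\emptyset$. The real work of the note should lie elsewhere — in checking, for the concrete pairs $(W,U)$ of interest, that $\langle W,\alpha\rangle=T(X)$ for all $\alpha\in U$, and in describing $\mathcal H(U,W)$ explicitly.
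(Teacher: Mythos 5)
Your proof is correct, and in the direction (i)$\Rightarrow$(ii) it takes a genuinely different and more economical route than the paper. The paper argues by contradiction: assuming $S$ meets every member of $\mathcal{H}(U,W)$, it builds a set $H\in J(Gen(U))$ disjoint from $S$ by a recursive selection over an enumeration $Gen(U)=\{A_i\mid i\in\mathbb{N}\}$, then separately proves that $T(X)\setminus H$ is a subsemigroup and invokes maximality to force $S=T(X)\setminus H$. You instead verify directly that $H:=T(X)\setminus S$ satisfies the three clauses defining $J(Gen(U))$, using maximality to manufacture, for each $\beta\notin S$, a finite witness $A_\beta=F\cup\{\beta\}$ with $F\subseteq S$ and $\alpha\in\langle A_\beta\rangle$; since $\alpha\in U$ cannot lie in the proper subsemigroup $S$, the set $A_\beta$ does the job and $A_\beta\cap H=\{\beta\}$ is immediate. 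This buys two things. First, it sidesteps the paper's enumeration of $Gen(U)$ by $\mathbb{N}$, which is in fact problematic: for any fixed $\alpha\in U$ the set $\{\alpha,\beta\}$ lies in $Gen(U)$ for every $\beta\in T(X)$, so $|Gen(U)|=2^{\aleph_0}$ and the recursion would have to be recast transfinitely. Second, it isolates exactly where maximality enters. What the paper's route delivers in exchange (once repaired) is more information: every \emph{proper} subsemigroup containing $W$ is contained in $T(X)\setminus H$ for some $H\in\mathcal{H}(U,W)$, and $T(X)\setminus H$ is a subsemigroup for \emph{every} $H\in\mathcal{H}(U,W)$ --- facts used implicitly later in the paper but not required for the stated equivalence, since there $S$ is hypothesized to be a subsemigroup. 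Your (ii)$\Rightarrow$(i) is essentially the paper's argument, with the sensible additions of checking $H\neq\emptyset$ and of flagging that $U\neq\emptyset$ must be assumed throughout (for $U=\emptyset$ one gets $\mathcal{H}(U,W)=\{\emptyset\}$ and the equivalence fails); both of these points are passed over silently in the paper.
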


\begin{proof}
$(i)\Rightarrow (ii)$: Assume that $S\cap H\neq\emptyset$ for all
$H\in\mathcal{H}(U,W)$. Then there is $A\in Gen(U)$ with
$A\subseteqq S$. Otherwise $A\nsubseteqq S$, i.e. $A\setminus
S\neq\emptyset$, for all $A\in Gen(U)$. We consider the set
$Gen(U)=\{A_i|i\in \mathbb{N}\}$. We put
$\overline{A}_i=A_i\setminus S$ for all $i\in\mathbb{N}$,

$H_0=\emptyset$ and $\overline{H}_0=\emptyset$;

$H_{i+1}=H_i$ if $\overline{A}_{i+1}\cap H_i\neq\emptyset$ or
$\exists j>i+1$ with
$\overline{A}_j\setminus\overline{H}_i\subseteqq \overline{A}_{i+1}$
then we define

$\overline{H}_{i+1}=\overline{H}_i$;

$H_{i+1}=H_i\cup\{a_{i+1}\}$ if $\overline{A}_{i+1}\cap
H_i=\emptyset$ and $\forall j>i+1$,
$\overline{A}_j\setminus\overline{H}_i\nsubseteqq
\overline{A}_{i+1}$ then we define

$\overline{H}_{i+1}=(\overline{H}_i\cup\overline{A}_{i+1})\setminus\{a_{i+1}\}$
for $a_{i+1}\in \overline{A}_{i+1}\setminus\overline{H}_i$.

We want to show that
$\overline{A}_{i+1}\setminus\overline{H}_i\neq\emptyset$ for all
$i\in\mathbb{N}$. Let us consider $i\in\mathbb{N}$. We know that
$\overline{A}_i=A_i\setminus S\neq\emptyset$ by assumption. If
$H_i=\emptyset$ then
$\overline{A}_{i+1}\setminus\overline{H}_i\neq\emptyset$. Assume
that $H_i\neq\emptyset$. Then there exists $k\in\mathbb{N}$ with
$k<i$ such that $H_k\cup\{a_{k+1}\}=H_{k+1}=H_i$. If
$\overline{A}_{i+1}\setminus\overline{H}_k\subseteqq\overline{A}_{k+1}$
then $\overline{H}_{k+1}=\overline{H}_k=\overline{H}_i$. This
implies that
$\overline{A}_{i+1}\setminus\overline{H}_i\neq\emptyset$. Then
$\overline{A}_j\setminus\overline{H}_k\nsubseteqq\overline{A}_{k+1}$
for all $j>k+1$. Thus
$\overline{A}_{i+1}\setminus\overline{H}_k\nsubseteqq\overline{A}_{k+1}$
and there exists $x\in\overline{A}_{i+1}$ but
$x\notin\overline{H}_k$ and $x\notin\overline{A}_{k+1}$, i.e.
$x\in\overline{A}_{i+1}$ but
$x\notin\overline{H}_k\cup\overline{A}_{k+1}$. Moreover, we have
$x\in\overline{A}_{i+1}$ but
$x\notin(\overline{H}_k\cup\overline{A}_{k+1})\setminus\{a_{k+1}\}=\overline{H}_i$.
This shows that $x\in\overline{A}_{i+1}\setminus\overline{H}_i$.
This completes the proof that
$\overline{A}_{i+1}\setminus\overline{H}_i\neq\emptyset$. Moreover,
we put $H:=\underset{i\in \mathbb{N}}\bigcup H_i$.

We want to show that $H:=\underset{i\in \mathbb{N}}\bigcup H_i\in
J(Gen(U))$. First, we will show that $H\subseteqq \bigcup
Gen(U):=\{a_i|\exists A_i\in Gen(U)\wedge a_i\in A_i\}$. By
definition of $H$, we know that $H_0=\emptyset$, $H_{i+1}=H_i$ or
$H_{i+1}=H_i\cup\{a_{i+1}\}$ for $i\in\mathbb{N}$. This shows that
$H\subseteqq\bigcup Gen(U)$.

Let $i \in \mathbb{N}$. Then we have to show that $H\cap
A_i\neq\emptyset$. If $a_i \in H$ then all is clear. Otherwise,
assume that $a_i\notin H$. Then, $\overline{A}_i\cap
H_{i-1}\neq\emptyset$ and thus $\overline{A}_i\cap H_i\neq\emptyset$
or $\exists k>i$ with
$\overline{A}_k\setminus\overline{H}_{i-1}\subseteqq
\overline{A}_{i}$. We have to consider $\overline{A}_k$ with $k>i$.
Then $a_k \in H$ or we have the same cases as for $A_i$. Since the
elements of $Gen(U)$ are finite, this procedure finish. So we obtain
an $s \geqq i$ such that $a_s \in H$ and it is routine to see that
also $a_s \in \overline{A}_i$. Thus $H\cap A_i\neq\emptyset$.

Let $a \in H$. Then there exists an $i \in \mathbb{N}$ such that
$a=a_i \in A_i$. We have to show that for all $k \in \mathbb{N}
\setminus \{i\}$, if $a_k \in H$ (i.e. $a_k \in A_k$) then $a_k$
does not belong to $A_i$. We have two cases:

{\it Case 1:} $k>i$. Then $a_k$ does not belong to $H_i \subseteqq
H_{k-1}$ and moreover, it does not belong to $\overline{H}_i
\subseteqq\overline{H}_{k-1}$. But $\overline{A}_i\subseteq
H_i\cup\overline{H}_i$. So $a_k$ does not belong to $A_i$.

{\it Case 2:} $k<i$. Then $a_k \in H_{i-1}$ where $H_{i-1}\cap
A_i=\emptyset$, whence $a_k\notin A_i$.

This shows that $H\in J(Gen(U))$. But $H\cap S=\emptyset$ is a
contradiction. Then there is $A\in Gen(U)$ with $A\subseteqq S$,
i.e. $\langle A\rangle\cap U\neq\emptyset$. Let $\alpha\in\langle
A\rangle\cap U\subseteqq S$. Then $T(X)=\langle W,\alpha\rangle\leq
S$, i.e. $S=T(X)$, a contradiction. Hence, there is
$H\in\mathcal{H}(U,W)$ with $S\cap H=\emptyset$, i.e. $S\subseteqq
T(X)\setminus H$. We want to show that $T(X)\setminus H$ is a
semigroup. Let $\alpha,\beta\in T(X)\setminus H$. Assume that
$\alpha\beta\in H$. Then there is $A\in Gen(U)$ with $A\cap
H=\{\alpha\beta\}$. Since $U\cap\langle A\rangle\neq\emptyset$,
$U\cap\langle A\setminus\{\alpha\beta\}\cup
\{\alpha,\beta\}\rangle\neq\emptyset$, i.e.
$(A\setminus\{\alpha\beta\})\cup\{\alpha,\beta\}\in Gen(U)$. Since
$A\cap H=\{\alpha\beta\}$, $(A\setminus\{\alpha \beta \})\cup
\{\alpha ,\beta \}\in Gen(U)$, implies $\alpha\in H$ or $\beta\in
H$, a contradiction. Hence $T(X)\setminus H$ is a semigroup. Since
$S$ is a maximal subsemigroup of $T(X)$ and $S\subseteqq
T(X)\setminus H$ this implies $S=T(X)\setminus H$.

$(ii)\Rightarrow (i)$: Let $H\in\mathcal{H}(U,W)$ with
$S=T(X)\setminus H$. We have shown that $T(X)\setminus H$ is a
semigroup. Now, we want to show that it is a maximal subsemigrop of
$T(X)$. Let $\alpha\in H$. Then there is $A\in Gen(U)$ with $A\cap
H=\{\alpha\}$ and $T(X)=\langle W,A\rangle\subseteqq\langle
T(X)\setminus H,A\rangle=\langle T(X)\setminus H,\alpha\rangle$
since $\langle A\rangle\cap U\neq\emptyset$ and $\langle
W,\beta\rangle=T(X)$ for all $\beta\in U$. So, $\langle
T(X)\setminus H,\alpha\rangle=T(X)$. This shows that $T(X)\setminus
H$ is a maximal subsemigroup of $T(X)$.
\end{proof}

If $\alpha\in T(X)$ and $A\subseteqq X$ such that the restriction of
$\alpha$ to $A$ is injective and have the same range as $\alpha$,
then we will refer $A$ as transversal of $\alpha$ ($ker\alpha$
denotes the kernel of $\alpha$). We will also write $A\#\ker \alpha
$ if $A$ is a transversal of $\alpha $.

Let $D(\alpha):=X\setminus im\alpha$ $(im\alpha$ denotes the range
of $\alpha)$. The rank $\alpha $, i.e. the cardinality of $%
im\alpha $, is denoted by $rank(\alpha ):=\left\vert im\alpha
\right\vert $. Then $d(\alpha):=|D(\alpha)|$ is called defect of
$\alpha$ and $c(\alpha):=\underset{y\in im\alpha }{\sum }(\left\vert
y\alpha ^{-1}\right\vert -1)$ is called collapse of $\alpha$.

Moreover, we put $K(\alpha):=\{x\in
im\alpha||x\alpha^{-1}|=\aleph_0\}$ and $k(\alpha):=|K(\alpha)|$ is
called infinite contractive index. It is well known that
$d(\alpha\beta)\leq d(\alpha)+d(\beta)$, $k(\alpha\beta)\leq
k(\alpha)+k(\beta)$ \cite{[4]} and $c(\alpha\beta)\leq
c(\alpha)+c(\beta)$ \cite{[1]} for $\alpha,\beta\in T(X)$. For more
background in the theory of transformation semigroups see \cite{[4]}
and \cite{[7]}.

Now we want to determine the maximal subsemigroups of $T(X)$
containing $T(X)\setminus S$, where $S$ is one of the five maximal
subsemigroups of $T(X)$ containing $Sym(X)$. Let us introduce the
following five sets:

\begin{itemize}
\item $Inj(X)$ $:=\{\alpha \in T(X)\mid rank(\alpha )=\aleph _{0}, c(\alpha )=0$ and $d(\alpha )\neq 0\}
$ (the set of injective but not surjective mappings on $X$).

\item $Sur(X):=\{\alpha \in T(X)\mid rank(\alpha )=\aleph _{0}, c(\alpha )\neq 0$ and $d(\alpha )=0\}$
(the set of surjective but not injective mappings on $X$).

\item $C_{p}(X):=\{\alpha \in T(X)\mid rank(\alpha )=\aleph _{0}, k(\alpha )=\aleph _{0}\}$.

\item $IF(X):=\{\alpha \in T(X)\mid rank(\alpha )=\aleph _{0}, c(\alpha )=\aleph _{0}$ and $d(\alpha
)<\aleph _{0}\}$.

\item $FI(X):=\{\alpha \in T(X)\mid rank(\alpha )=\aleph _{0}, d(\alpha )=\aleph _{0}$ and $c(\alpha
)<\aleph _{0}\}$.
\end{itemize}

In \cite{[5]}, the following proposition was proved. Note that we
independently proved this proposition whilst of the work of L.
Heindorf. We thank Martin Goldstern for bringing these reference to
our consideration at the AAA82 in Potsdam (June 2011). For the sake
of completeness, we include the proof of this proposition.

\begin{proposition}
The following semigroups of $T(X)$ are maximal:
\[
T(X)\setminus H
\]%
for $H\in \{Inj(X),Sur(X),C_{p}(X),IF(X),FI(X)\}$. \label{prop1}
\end{proposition}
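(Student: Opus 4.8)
To show that $T(X)\setminus H$ is a maximal subsemigroup of $T(X)$ (for a fixed $H$ in the list), I would isolate two facts about $H$: \emph{(a) absorption}, that $\alpha\beta\in H$ implies $\alpha\in H$ or $\beta\in H$; and \emph{(b) generation}, that $\langle(T(X)\setminus H)\cup\{\alpha\}\rangle=T(X)$ for every $\alpha\in H$. Fact (a) with two factors says precisely that $T(X)\setminus H$ is closed under composition, so, $H$ being nonempty, it is a proper subsemigroup; together with (b) and the standard characterisation of maximality this already yields the Proposition. Equivalently, one may invoke Theorem \ref{thm1} with $W:=T(X)\setminus H$ and $U:=H$: then $U\cap W=\emptyset$, (b) is the hypothesis $\langle W,\alpha\rangle=T(X)$ for all $\alpha\in U$, conditions $1$ and $3$ in the definition of $J(Gen(H))$ are witnessed by the singletons $\{\alpha\}$ with $\alpha\in H$, and condition $2$ follows from the $n$-factor form of (a) (an immediate induction): if $\mu=\alpha_{1}\cdots\alpha_{n}\in\langle A\rangle\cap H$ with each $\alpha_{i}\in A$, then some $\alpha_{i}\in A\cap H$. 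Hence $H\in\mathcal{H}(H,T(X)\setminus H)$, and $(ii)\Rightarrow(i)$ of Theorem \ref{thm1} gives the maximality.

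For (a) I would use, besides the inequalities recalled before the Proposition, that $rank(\alpha\beta)\leq\min\{rank(\alpha),rank(\beta)\}$, that $im(\alpha\beta)\subseteq im\beta$ (so $d(\beta)\leq d(\alpha\beta)$), and that $\ker\alpha$ refines $\ker(\alpha\beta)$ (so $c(\alpha)\leq c(\alpha\beta)$). For $C_{p}(X)$, absorption is immediate from $k(\alpha\beta)\leq k(\alpha)+k(\beta)$, since $k(\cdot)=\aleph_{0}$ already forces rank $\aleph_{0}$. For $Inj(X)$, injectivity of $\alpha\beta$ forces $\alpha$ injective, so by the rank bound $\alpha$ is bijective or lies in $Inj(X)$, and in the bijective case $\beta=\alpha^{-1}(\alpha\beta)$ lies in $Inj(X)$; dually, for $Sur(X)$, surjectivity of $\alpha\beta$ forces $\beta$ surjective, and if $\beta$ is bijective then $\alpha=(\alpha\beta)\beta^{-1}$ lies in $Sur(X)$. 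The delicate cases are $IF(X)$ and $FI(X)$. If $\alpha\beta\in IF(X)$, then $d(\beta)\leq d(\alpha\beta)<\aleph_{0}$ and $c(\alpha)=\aleph_{0}$ or $c(\beta)=\aleph_{0}$; if $c(\beta)=\aleph_{0}$ then $\beta\in IF(X)$, and otherwise $c(\alpha)=\aleph_{0}$, while $d(\alpha)<\aleph_{0}$ as well — for if $d(\alpha)=\aleph_{0}$ then, since $\beta$ has finite collapse, $\beta$ carries the infinite set $D(\alpha)$ to an infinite set meeting $im(\alpha\beta)$ in only finitely many points, so $d(\alpha\beta)=|X\setminus im(\alpha\beta)|=\aleph_{0}$, a contradiction — hence $\alpha\in IF(X)$. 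The case $FI(X)$ is dual: $c(\alpha)\leq c(\alpha\beta)<\aleph_{0}$ and $d(\alpha)=\aleph_{0}$ or $d(\beta)=\aleph_{0}$; if $d(\alpha)=\aleph_{0}$ then $\alpha\in FI(X)$, and otherwise $d(\beta)=\aleph_{0}$ while $c(\beta)<\aleph_{0}$ — for $c(\beta)=\aleph_{0}$ would make the restriction of $\beta$ to the cofinite set $im\alpha$ have infinite collapse, forcing $c(\alpha\beta)=\aleph_{0}$ — hence $\beta\in FI(X)$.

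For (b), fix $\alpha\in H$ and $\gamma\in T(X)$; if $\gamma\notin H$ there is nothing to do, so assume $\gamma\in H$, and recall that every bijection of $X$ lies in $T(X)\setminus H$ since $Sym(X)\cap H=\emptyset$. In each case I would exhibit $\gamma$ as a word in $\alpha$ and elements of $T(X)\setminus H$ (bijections among them). For $Inj(X)$, write $\gamma=\alpha\delta$: since $\alpha$ is injective, $\delta$ is forced on $im\alpha$, and since $D(\alpha)\neq\emptyset$ one extends $\delta$ over $D(\alpha)$ non-injectively, so $\delta\notin Inj(X)$. For $Sur(X)$, dually write $\gamma=\delta\alpha$: since $\alpha$ is surjective each value $x\delta$ may be chosen in the fibre $(x\gamma)\alpha^{-1}$, and since $\alpha$ is non-injective this freedom lets one keep $\delta$ non-surjective. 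For $C_{p}(X)$, $IF(X)$ and $FI(X)$, set $\alpha':=p\alpha q$ for bijections $p,q$ chosen — after, if needed, composing $\alpha$ with a suitable surjection from $T(X)\setminus H$ to shrink its defect — so that $im\gamma\subseteq im\alpha'$, and write $\gamma=\mu\alpha'$, where $\mu$ sends the $\gamma$-fibre over each $z\in im\gamma$ into the $\alpha'$-fibre over $z$. The slack in this routing, provided by the infinitely many infinite fibres of $\alpha$ (for $C_{p}$), by the infinite collapse of $\alpha$ (for $IF$), or by the infinite defect of $\alpha$ (for $FI$), lets $\mu$ be chosen with, respectively, only finitely many infinite fibres, infinite defect, or finite defect, so that $\mu\notin H$. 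In every case $\gamma\in\langle(T(X)\setminus H)\cup\{\alpha\}\rangle$.

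The step I expect to be hardest is (b) for $IF(X)$ and $FI(X)$: arranging the auxiliary map $\mu$ so that it lies outside $H$ while $\gamma=\mu\alpha'$ holds on the nose needs careful bookkeeping with $\aleph_{0}$-arithmetic (an infinite defect or collapse absorbing a finite one), and the same arithmetic is what makes the counting step in (a) work for these two sets. The sets $Inj(X)$, $Sur(X)$ and $C_{p}(X)$ are comparatively routine, and once (a) and (b) are in place the conclusion follows uniformly from Theorem \ref{thm1}, or directly from the definition of maximality.
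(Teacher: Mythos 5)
Your proposal is correct and follows essentially the same route as the paper: closure of $T(X)\setminus H$ under composition via the same case analysis on $d$, $c$ and $k$ (including the two delicate mixed cases for $IF(X)$ and $FI(X)$), and maximality via explicit factorizations that route the kernel classes of a target map into the fibres of $\alpha$ using auxiliary maps lying outside $H$. The only cosmetic differences are your optional repackaging through Theorem \ref{thm1} with $U=H$ and $W=T(X)\setminus H$ (valid, but it gains nothing over the direct maximality argument) and your preference for a two-factor decomposition $\mu\alpha'$ with a pre-adjusted $\alpha'$ in place of the paper's three-factor sandwich $\gamma\alpha\delta$.
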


\begin{proof}
1) Let $\alpha ,\beta \in T(X)\setminus Inj(X)$. Assume that $\alpha
\beta \in Inj(X)$. Then $c(\alpha )=0$, i.e. $\alpha $ is injective.
Since $\alpha \notin Inj(X)$, $\alpha \in Sym(X)$. But $c(\alpha
\beta)=0$ and $\alpha \in Sym(X)$ implies $\beta $ is injective.
Since $\beta \notin Inj(X)$, $\beta \in Sym(X)$. So $\alpha \beta
\in Sym(X)$, i.e. $\alpha \beta $ is surjective, contradicts $\alpha
\beta \in Inj(X)$. This shows that $T(X)\setminus Inj(X)$ is a
semigroup. \newline Let $\alpha \in Inj(X)$. Then we will show that
$\left\langle T(X)\setminus
Inj(X),\alpha \right\rangle =T(X)$. For this let $\beta \in Inj(X)$. Let $%
a\in im\beta $. Let $\gamma \in T(X)$ with $x\gamma =a$ for $x\in D(\alpha )$%
, and $x\gamma =x\alpha ^{-1}\beta $ for $x\in im\alpha $. Then
$x\alpha
\gamma =x\alpha \alpha ^{-1}\beta =x\beta $ for all $x\in X$. This shows $%
\alpha \gamma =\beta $, where $\gamma \notin Inj(X)$ since $D(\alpha
)\neq \emptyset $. This shows that $T(X)\setminus Inj(X)$ is
maximal.

2) Let $\alpha ,\beta \in T(X)\setminus Sur(X)$. Assume that $\alpha
\beta \in Sur(X)$. Then $d(\beta )=0$, i.e. $\beta $ is surjective.
Since $\beta \notin Sur(X)$, $\beta \in Sym(X)$. But $d(\alpha \beta
)=0$ and $\beta \in Sym(X)$ implies $\alpha $ is surjective. Since
$\alpha \notin Sur(X)$, $\alpha \in Sym(X)$. So $\alpha \beta \in
Sym(X)$, i.e. $\alpha \beta $
is injective, contradicts $\alpha \beta \in Sur(X)$. This shows that $%
T(X)\setminus Sur(X)$ is a semigroup.\newline Let $\alpha \in
Sur(X)$. Then we will show that $\left\langle T(X)\setminus
Sur(X),\alpha \right\rangle =T(X)$. For this let $\beta \in Sur(X)$.
For all
$\overline{x}\in X/\ker \alpha $ we fix a $\overline{x}^{\ast }\in \overline{%
x}$. Then we consider the following $\delta \in T(X)$ with $i\delta
=(i\beta \alpha ^{-1})^{\ast }$ for all $i\in X$. Hence $i\delta
\alpha =i\beta $ for
all $i\in X$. This shows $\delta \alpha =\beta $. Since $im\delta =\{%
\overline{x}^{\ast }\mid \overline{x}\in X/\ker \alpha \}\neq X$ (because $%
\alpha $ is not injective), $\delta \in T(X)\setminus Sur(X).$ This shows $%
\beta =\delta \alpha \in \left\langle T(X)\setminus Sur(X),\alpha
\right\rangle $. Consequently, $\left\langle T(X)\setminus
Sur(X),\alpha \right\rangle $ $=T(X)$. This shows that
$T(X)\setminus Sur(X)$ is maximal.

3) Let $\alpha ,\beta \in T(X)\setminus C_{p}(X)$. Further, let
$x\in im\alpha \beta $. Then $x(\alpha \beta )^{-1}=(x\beta
^{-1})\alpha ^{-1}$
and $\left\vert (x\beta ^{-1})\alpha ^{-1}\right\vert =\aleph _{0}$ if $%
\left\vert x\beta ^{-1}\right\vert =\aleph _{0}$ or there is a $y\in
x\beta
^{-1}\cap im\alpha $ with $\left\vert y\alpha ^{-1}\right\vert =\aleph _{0}$%
. This shows $k(\alpha \beta )\leq k(\alpha )+k(\beta )<\aleph
_{0}+\aleph
_{0}=\aleph _{0}$. This shows that $\alpha \beta \in T(X)\setminus C_{p}(X)$%
. \newline Let $\alpha \in C_{p}(X)$. Then, we will show that
$\left\langle T(X)\setminus C_{p}(X),\alpha \right\rangle =T(X)$.
For this let $\beta \in C_{p}(X)$. Then, there is a bijection
\[
f:X/\ker \beta \rightarrow \{x\alpha ^{-1}\mid x\in K(\alpha
)\}\text{.}
\]%
For each $\overline{x}\in X/\ker \beta $, there is an injective mapping%
\[
f_{\overline{x}}:\overline{x}\rightarrow f(\overline{x})\text{.}
\]%
We take the $\gamma \in Inj(X)$ with $i\gamma =f_{\overline{x}}(i)$ where $%
i\in \overline{x}$ for $\overline{x}\in X/\ker \beta $. For $i,j\in
X$, $i\beta =j\beta $ if and only if there is an $\overline{x}\in
X/\ker \beta $ with $i,j\in \overline{x}$, i.e. $f_{\overline{x}%
}(i)\alpha =f_{\overline{x}}(j)\alpha $. But $f_{\overline{x}}(i)\alpha =f_{%
\overline{x}}(j)\alpha $ is equivalent to $i\gamma \alpha =j\gamma
\alpha $,
consequently, we have $i\gamma \alpha =j\gamma \alpha $ if and only if $%
i\beta =j\beta $. Further, let $\delta \in T(X)$ with $i\gamma
\alpha \delta =i\beta $ for $i\in X$ and $i\delta =x_{0}$ ($x_{0}$
is any fixed element in $X$) for $i\in X\setminus im\gamma \alpha$.
Since $i\gamma \alpha =j\gamma \alpha $ if and only
if $i\beta =j\beta $, $\delta $ is well defined. Moreover, $\mid\{\overline{x}%
\mid \overline{x}\in X/\ker \delta ,\left\vert
\overline{x}\right\vert
=\aleph _{0}\}\mid <2$, i.e. $\delta \in T(X)\setminus C_{p}(X)$. This shows $%
\beta =\gamma \alpha \delta \in \left\langle T(X)\setminus
C_{p}(X),\alpha \right\rangle $. Consequently, $\left\langle
T(X)\setminus C_{p}(X),\alpha \right\rangle $ $=T(X)$. This shows
that $T(X)\setminus C_{p}(X)$ is maximal.

4) Let $\alpha ,\beta \in T(X)\setminus IF(X)$. \newline If
$c(\alpha )<\aleph _{0}$ and $c(\beta )<\aleph _{0}$ then $c(\alpha
\beta
)\leq c(\alpha )+c(\beta )<\aleph _{0}$, i.e. $\alpha \beta \notin IF(X)$.%
\newline
If $d(\alpha )=\aleph _{0}$ and $c(\beta )<\aleph _{0}$ then $\left\vert \{%
\overline{x}\in X/\ker \beta \mid \overline{x}\cap im\alpha
=\emptyset \}\right\vert =\aleph _{0}$. This implies $d(\alpha \beta
)=\aleph _{0}$, i.e. $\alpha \beta \notin IF(X)$.\newline
If $d(\beta )=\aleph _{0}$ then $d(\alpha \beta )\geq d(\beta )=\aleph _{0}$%
, i.e. $\alpha \beta \notin IF(X)$.\newline Altogether, this shows
that $\alpha \beta \in T(X)\setminus IF(X)$. \newline Let $\alpha
\in IF(X)$. Then we will show that $\left\langle T(X)\setminus
IF(X),\alpha \right\rangle =T(X)$. For this let $\beta \in IF(X)$. Let $%
\gamma \in T(X)$ with $\ker \gamma =\ker \beta $ and $im\gamma
\#\ker \alpha
$. For each $\overline{x}\in X/\ker \beta $, we fix any $\overline{x}%
^{\ast }$. Since $c(\alpha )=$ $\aleph _{0}$, $d(\gamma )=$ $\aleph
_{0}$, i.e. $\gamma \notin IF(X)$. Further, let $\delta \in T(X)$
with $im\alpha
\#\ker \delta $ and $i\delta =(i(\gamma \alpha )^{-1})^{\ast }\beta $ for $%
i\in im\alpha $. Since $im\gamma \#\ker \alpha $, we have $im\alpha
=im\gamma \alpha $ and $\delta $ is well defined. Because of
$im\gamma
\#\ker \alpha $, $\ker \gamma \alpha =\ker \gamma =\ker \beta $, where $%
i\gamma \alpha \delta =(i\gamma \alpha (\gamma \alpha )^{-1})^{\ast
}\beta =i\beta $ for $i\in X$. Note that from $im\alpha \#\ker
\delta $ and $d(\alpha )<\aleph _{0}$, it follows $c(\delta )<\aleph
_{0}$, i.e. $\delta \notin IF(X)$. This shows $\beta =\gamma \alpha
\delta \in \left\langle T(X)\setminus IF(X),\alpha \right\rangle $.
Consequently, $\left\langle T(X)\setminus IF(X),\alpha \right\rangle
$ $=T(X)$. This shows that $T(X)\setminus IF(X)$ is maximal.

5) Let $\alpha ,\beta \in T(X)\setminus FI(X)$. \newline If
$c(\alpha )=\aleph _{0}$ then $c(\alpha \beta )\geq c(\alpha
)=\aleph _{0} $, i.e. $\alpha \beta \notin FI(X)$.\newline If
$d(\alpha )<\aleph _{0}$ and $c(\beta )=\aleph _{0}$ then
$\left\vert
\{i\in im\alpha \mid \exists j\in im\alpha \setminus \{i\}\text{ such that }%
i\beta =j\beta\} \right\vert =\aleph _{0}$. This implies $c(\alpha
\beta )=\aleph _{0}$, i.e. $\alpha \beta \notin FI(X)$.\newline If
$d(\alpha )<\aleph _{0}$ and $d(\beta )<\aleph _{0}$\ then $d(\alpha
\beta )\leq d(\alpha )+d(\beta )<\aleph _{0}$, i.e. $\alpha \beta
\notin FI(X)$.\newline Altogether, this shows that $\alpha \beta \in
T(X)\setminus FI(X)$. \newline Let $\alpha \in FI(X)$. Then, we will
show that $\left\langle T(X)\setminus
FI(X),\alpha \right\rangle =T(X)$. For this, let $\beta \in FI(X)$. Let $%
\gamma \in T(X)$ with $\ker \gamma =\ker \beta $ and $im\gamma
\#\ker \alpha
$. For each $\overline{x}\in X/\ker \beta $, we fix any $\overline{x}%
^{\ast }$. Since $c(\alpha )<$ $\aleph _{0}$, $d(\gamma )<$ $\aleph
_{0}$, i.e. $\gamma \notin FI(X)$. Further, let $\delta \in T(X)$
with $im\alpha
\#\ker \delta $ and $i\delta =(i(\gamma \alpha )^{-1})^{\ast }\beta $ for $%
i\in im\alpha $. Since $im\gamma \#\ker \alpha $, $im\alpha
=im\gamma \alpha $ and $\delta $ is well defined, where $i\gamma
\alpha \delta =(i\gamma
\alpha (\gamma \alpha )^{-1})^{\ast }\beta =i\beta $ for $i\in X$. Note that from $%
im\alpha \#\ker \delta $ and $c(\alpha )=\aleph _{0}$, it follows
$d(\delta )=\aleph _{0}$, i.e. $\delta \notin FI(X)$. This shows
$\beta =\gamma \alpha \delta \in \left\langle T(X)\setminus
FI(X),\alpha \right\rangle $.
Consequently, $\left\langle T(X)\setminus FI(X),\alpha \right\rangle $ $%
=T(X) $. This shows that $T(X)\setminus FI(X)$ is maximal.
\end{proof}

First, we characterize the maximal subsemigroups of $T(X)$
containing $Inj(X)$ and $Sur(X)$, respectively. Note that we do not
need Theorem \ref{thm1} here. It is well known that the set $F(X)$
of all transformation of a finite rank forms an ideal of $T(X)$ and
$Inf(X):=T(X)\setminus F(X)$ generates $T(X)$. The next lemma shows
that any maximal subsemigroup $S$ of $T(X)$ has the form $S=F(X)\cup
T$ for some $T\subset Inf(X)$.

\begin{lemma}
Let $S$ be a maximal subsemigroup of $T(X)$. Then $F(X)\subset S$.
\label{lmm1}
\end{lemma}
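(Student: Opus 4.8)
The plan is to use that $F(X)$ is a two-sided ideal of $T(X)$, so that $S\cup F(X)$ is automatically a subsemigroup, and then to play this against the maximality of $S$. First I would check that $S\cup F(X)$ is indeed a subsemigroup of $T(X)$: if $\alpha,\beta\in S\cup F(X)$, then either both lie in $S$, in which case $\alpha\beta\in S$, or at least one of them lies in $F(X)$, in which case $\alpha\beta\in F(X)$ because $F(X)$ is an ideal; so $\alpha\beta\in S\cup F(X)$ in all cases. Since $S\subseteqq S\cup F(X)\subseteqq T(X)$ and $S$ is a maximal subsemigroup of $T(X)$, it follows that $S\cup F(X)=S$ or $S\cup F(X)=T(X)$.

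Next I would rule out the second alternative. If $S\cup F(X)=T(X)$, then in particular $Inf(X)=T(X)\setminus F(X)\subseteqq S$; but $Inf(X)$ generates $T(X)$ (as recalled just before the lemma), so $T(X)=\left\langle Inf(X)\right\rangle\leq S$, which contradicts the fact that a maximal subsemigroup is, by definition, a \emph{proper} subsemigroup of $T(X)$. Hence $S\cup F(X)=S$, that is, $F(X)\subseteqq S$.

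Finally, for the strict inclusion it is enough to observe that $F(X)$ itself is not maximal: indeed $F(X)\subsetneq F(X)\cup Sym(X)\subsetneq T(X)$, where the middle set is a subsemigroup because $Sym(X)$ is a group and $F(X)$ an ideal, and it is proper because it avoids every element of $Inj(X)$. Therefore $S\neq F(X)$, and together with $F(X)\subseteqq S$ this gives $F(X)\subset S$. I do not expect a genuine obstacle here; the only two points that need a word of justification are the generation statement for $Inf(X)$, which is quoted as well known, and this last non-maximality remark, and both are elementary.
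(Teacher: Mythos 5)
Your proof is correct and takes essentially the same route as the paper: both arguments rest on $F(X)$ being an ideal (so $S\cup F(X)$ is a subsemigroup), on $Inf(X)=T(X)\setminus F(X)$ generating $T(X)$ (to exclude $S\cup F(X)=T(X)$), and on the maximality of $S$. Your closing remark that $F(X)$ is not itself maximal, which upgrades $F(X)\subseteqq S$ to a strict inclusion, is a small but legitimate addition that the paper's proof silently skips.
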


\begin{proof}
We have $Inf(X)\nsubseteqq S$ (since $S\neq T(X)$). Since $F(X)$
forms an ideal of $T(X)$ and thus $ Inf(X)\nsubseteqq S$ implies
$S\subseteq S\cup F(X)\neq T(X)$. Because of the maximality of $S$,
we have $S=S\cup F(X)$, i.e. $F(X)\subset S$.
\end{proof}

\begin{lemma}
Let $Sur(X)\subset $ $S\leq $ $T(X)$ with $Inj(X)\cap S\neq \emptyset $ and $%
FI(X)\cap S\neq \emptyset $. Then
\[
S=T(X)\text{.}
\] \label{lmm2}
\end{lemma}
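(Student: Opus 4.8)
The plan is to prove $S=T(X)$ by showing successively that $Sym(X)\subseteq S$, that $S$ contains \emph{every} injective transformation of $X$, and that $S$ contains \emph{every} transformation of finite rank; since the transformations of defect $0$ are precisely the surjective ones, which lie in $Sym(X)\cup Sur(X)\subseteq S$, this exhausts $T(X)$. Throughout I will use the standard observation that $\delta'\in Sym(X)\,\delta\,Sym(X)$ whenever $\delta$ and $\delta'$ have the same defect and the same multiset of fibre cardinalities $\{\,|y\delta^{-1}|:y\in im\,\delta\,\}$; hence for each admissible pair $(\text{fibre-multiset},\ \text{defect})$ it suffices to exhibit \emph{one} representative lying in $S$.

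\emph{Step 1 ($Sym(X)\subseteq S$).} Fix $\alpha\in Inj(X)\cap S$, so $D(\alpha)\neq\emptyset$. Given $g\in Sym(X)$, define $\gamma$ by $x\gamma=a$ for $x\in D(\alpha)$ (with $a\in X$ fixed) and $x\gamma=x\alpha^{-1}g$ for $x\in im\,\alpha$; this is the factorisation already used in the proof of Proposition~\ref{prop1}(1). Then $\alpha\gamma=g$, while $im\,\gamma=\{a\}\cup(im\,\alpha)\alpha^{-1}g=\{a\}\cup Xg=X$ and $a$ has at least two $\gamma$-preimages, so $\gamma\in Sur(X)\subseteq S$ and therefore $g=\alpha\gamma\in S$.

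\emph{Step 2 (all injective maps lie in $S$).} Fix $\beta\in FI(X)\cap S$ and let $Y$ be the union of the non-singleton fibres of $\beta$; since $c(\beta)<\aleph_{0}$, the set $Y$ is finite and $\beta$ is injective on $X\setminus Y$. Using that $\alpha^{k}\in S$ for all $k$, that $Sym(X)\subseteq S$, and that $d(\alpha^{k})=k\,d(\alpha)$, a suitable $k$ yields (after composing with a permutation) an injective $\alpha'\in S$ with $im\,\alpha'\cap Y=\emptyset$, whence $\varepsilon_{\infty}:=\alpha'\beta\in S$ is injective with $d(\varepsilon_{\infty})\geq d(\beta)=\aleph_{0}$. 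Next pick $p_{0}\in X$ and let $\sigma\in T(X)$ be a bijection of $im\,\varepsilon_{\infty}$ onto $X\setminus\{p_{0}\}$ that is constant $=p_{0}$ on the infinite set $X\setminus im\,\varepsilon_{\infty}$; then $\sigma$ is surjective with $c(\sigma)=\aleph_{0}$, so $\sigma\in Sur(X)\subseteq S$, and $\varepsilon_{1}:=\varepsilon_{\infty}\sigma\in S$ is injective with $d(\varepsilon_{1})=1$. The powers $\varepsilon_{1}^{\,n}$ are injective with $d(\varepsilon_{1}^{\,n})=n$, so combining them, $\varepsilon_{\infty}$, and $Sym(X)$ with the double cosets from the observation above, $S$ contains an injective transformation of every defect $n<\aleph_{0}$ and of defect $\aleph_{0}$, i.e. $S$ contains every injective map; call this set $\mathcal{I}$. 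This is the step I expect to be the real obstacle: composition never decreases the collapse of $\beta$ (kernels only grow coarser under products), so one cannot ``repair'' $\beta$ directly — the key manoeuvre is first to restrict $\beta$ to a set disjoint from its finite collapse-region $Y$ by means of an injective member of $S$ supplied by the powers $\alpha^{k}$, and only then to trade the infinite defect thus produced for defect $1$ with the help of an element of $Sur(X)$ of infinite collapse.

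\emph{Step 3 (conclusion).} Let $\delta\in T(X)$. If $d(\delta)=0$ then $\delta$ is surjective, so $\delta\in Sym(X)\cup Sur(X)\subseteq S$. If $rank(\delta)=\aleph_{0}$ and $d(\delta)\geq1$, write $\delta=\sigma\varepsilon$ with $\sigma$ the surjection having kernel $\ker\delta$ and $\varepsilon$ the injection sending the $\sigma$-value of a fibre of $\delta$ to the corresponding $\delta$-image; then $c(\sigma)=c(\delta)$ and $d(\varepsilon)=d(\delta)\geq1$, so either $c(\delta)=0$ and $\delta=\varepsilon\in\mathcal{I}$, or $c(\delta)\neq0$ and $\delta=\sigma\varepsilon\in Sur(X)\cdot\mathcal{I}\subseteq S$. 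Finally, if $rank(\delta)=n<\aleph_{0}$, choose $\varepsilon\in\mathcal{I}$ with $d(\varepsilon)=\aleph_{0}$ and $\sigma\in Sur(X)$ that splits $im\,\varepsilon$ into $n$ blocks whose cardinalities are the fibre cardinalities of $\delta$, collapses each onto a point, and maps the infinite set $X\setminus im\,\varepsilon$ onto $X$; then $\varepsilon\sigma\in S$ has rank $n$ and the same fibre-multiset as $\delta$, so $\delta\in Sym(X)(\varepsilon\sigma)Sym(X)\subseteq S$. Hence $S=T(X)$.
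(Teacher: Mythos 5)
Your proof is correct, but it is organized differently from the paper's. Both arguments begin identically, obtaining $Sym(X)\subseteq S$ by factoring a permutation as (injective witness)$\cdot$(element of $Sur(X)$). From there the paper classifies the targets by defect: a map of infinite rank and \emph{finite} defect is written as $\gamma_{2}\beta^{r}\gamma_{1}$ with $\gamma_{1},\gamma_{2}$ surjective or bijective and $\beta$ the witness from $Inj(X)\cap S$ (powers of $\beta$ supplying enough defect), and a map of \emph{infinite} defect is then written as $\eta\delta\gamma_{3}$ with $\eta$ of finite defect, $\delta$ the witness from $FI(X)\cap S$, and $\gamma_{3}\in Sym(X)$; the finite-rank maps are absorbed because $Inf(X)$ generates $T(X)$. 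You instead first manufacture \emph{all} injective maps: the $FI(X)$-witness is converted into an injective element of infinite defect by precomposing with an injection of $S$ whose image avoids its finite collapse region, an element of $Sur(X)$ of infinite collapse then trades infinite defect for defect $1$, and powers together with $Sym(X)$-double cosets fill in every defect; afterwards an arbitrary infinite-rank map is factored as (surjection with the same kernel)$\cdot$(injection), and finite-rank maps are reached through double cosets. The ingredients are the same, but your order of attack makes the role of the two hypotheses more transparent (both witnesses are consumed in producing the set $\mathcal{I}$ of all injective maps), and it treats the finite-rank transformations explicitly, whereas the paper quotes Lemma~\ref{lmm1} for $F(X)\subset S$ even though that lemma formally assumes $S$ maximal (harmless, since $Inf(X)$ generates $T(X)$, but worth noting). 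The price you pay is the extra appeal to the description of the $Sym(X)\times Sym(X)$-orbits on $T(X)$ by defect and fibre multiset, which the paper's direct factorizations avoid; that observation is standard and you use it correctly.
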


\begin{proof}
We have $F(X)\subset S$ by Lemma \ref{lmm1}. Hence, we have to
consider only the elements of $Inf(X)$. Let $\alpha \in Sym(X)$.
Then there is a $\beta \in
Inj(X)\cap S$ and we take the $\gamma \in Sur(X)$ with $i\gamma =i$ for $%
i\in D(\beta )$ and $i\beta \gamma =i\alpha $ for $i\in X$. Since
$im\beta =X\setminus D(\beta )$, this shows that $\beta \gamma
=\alpha $, and consequently, $Sym(X)\subset S$. Let us put
\begin{eqnarray*}
A &:&=\{\alpha \in Inf(X)\mid d(\alpha )<\aleph _{0}\} \\
B &:&=\{\alpha \in Inf(X)\mid d(\alpha )=\aleph _{0}\}\text{.}
\end{eqnarray*}%
Clearly, $Inf(X)=A\cup B$. Let $\alpha \in A$. If $d(\beta )<\aleph
_{0}$ then for each natural number $k\geq 1$, there is a natural
number $r\geq 1$ such that $d(\beta ^{r})\geq k$. Since $\beta
^{r}\in Inj(X)\cap S$, we can assume that $d(\beta )\geq d(\alpha
)$. Since $d(\beta )\geq d(\alpha )$, there is a $\gamma _{1}\in
Sur(X)$ such that $\gamma_1$ restricted to $im\beta$ is bijective
with $im\alpha$ as range and $ D(\beta )\gamma _{1}=D(\alpha )$. We
take the $\gamma _{2}\in Inf(X)$ with $ i\gamma _{2}$ is the unique
element in $i\alpha \gamma _{1}^{-1}\beta ^{-1}$ for $i\in X$. Since
$\beta $ is injective, we have $\gamma _{2}\in Sur(X)\cup Sym(X)$%
. Then we have $i\gamma _{2}\beta \gamma _{1}=i\alpha \gamma
_{1}^{-1}\beta ^{-1}\beta \gamma _{1}=i\alpha $ for $i\in X$. This
shows $\alpha =\gamma _{2}\beta \gamma _{1}\in S$, and consequently,
$A\subset S$.

Let $\alpha \in B$. Moreover, there is a $\delta \in FI(X)\cap S$.
Then there is a $\eta \in
A$ with $\ker \alpha =\ker \eta $ and $im\eta \#\ker \delta $. Since $%
d(\alpha )=d(\delta )=\aleph _{0}$, there is a bijection $f:D(\delta
)\rightarrow D(\alpha )$. We take the $\gamma _{3}\in Sym(X)$
defined by $i\gamma _{3}=f(i)$ for $i\in D(\delta )$ and $i\gamma
_{3}=i\delta ^{-1}\eta ^{-1}\alpha $ for $i\in im\delta $. Then for
$i\in X$, $i\eta \delta \gamma _{3}=(i\eta \delta \delta ^{-1}\eta
^{-1})\alpha =i\alpha $ since $im\eta \#\ker \delta $. This shows
$\alpha =\eta \delta \gamma _{3}$ and consequently, $B\subset S$.
Altogether, $Inf(X)=A\cup B\subseteq S$ and thus $S=T(X)$.
\end{proof}

\begin{lemma}
Let $Inj(X)\subset $ $S\leq $ $T(X)$ with $H\cap S\neq \emptyset $
for $H\in $ $\{Sur(X),C_{p}(X),IF(X)\}$. Then
\[
S=T(X)\text{.}
\]%
\label{lmm3}
\end{lemma}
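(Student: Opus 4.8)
The plan is to show, in this order, that $S$ contains $Sym(X)$, then $F(X)$, and finally the whole of $Inf(X)$, which forces $S=T(X)$; the argument runs in the spirit of Lemma \ref{lmm2}, only now $Inj(X)$ lies entirely in $S$ while from the three classes $Sur(X),C_{p}(X),IF(X)$ we are handed a single representative each, say $\gamma\in Sur(X)\cap S$, $\eta\in C_{p}(X)\cap S$ and $\zeta\in IF(X)\cap S$ (together with all their powers and products, $S$ being a semigroup).

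First I would prove $Sym(X)\subseteq S$. Fix a transversal $T$ of $\ker\gamma$; since $c(\gamma)\geq 1$ we have $T\subsetneq X$, so $|X\setminus T|=c(\gamma)\neq 0$. Given $\sigma\in Sym(X)$, the map $\delta:=\sigma\,(\gamma|_{T})^{-1}$ is injective with $im\,\delta=T\subsetneq X$, hence $\delta\in Inj(X)\subseteq S$; and $\delta\gamma=\sigma$, so $\sigma\in S$. Next, $F(X)\subseteq S$: the element $\eta$ has $\aleph_{0}$ fibres of cardinality $\aleph_{0}$, so an arbitrary map $\phi$ of finite rank $n$ can be written $\phi=\beta\,\eta\,\tau$, where $\beta\in Inj(X)$ sends the $i$-th kernel class of $\phi$ bijectively onto a subset of matching cardinality inside the $i$-th of $n$ chosen infinite $\eta$-fibres, and $\tau\in Sym(X)$ carries the resulting $n$-element image onto $im\,\phi$. (One could instead invoke Lemma \ref{lmm1}.)

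The main work is $Inf(X)\subseteq S$. Split $Inf(X)=A\cup B$ with $A$ the infinite-rank maps of finite defect and $B$ those of infinite defect, as in Lemma \ref{lmm2}. For $\alpha\in Inf(X)$ the scheme is to write $\alpha=\delta\,\mu\,\tau$ with $\delta\in Inj(X)\cup Sym(X)\subseteq S$, $\tau\in Sym(X)\subseteq S$, and $\mu$ a power or product formed from $\gamma,\eta,\zeta$ whose fibres can host $\ker\alpha$ — a kernel class of cardinality $\kappa$ being absorbed, via $\delta$, into a fibre of $\mu$ of cardinality $\geq\kappa$ — and whose image has complement of the cardinality that, after translation by $\tau$, reproduces $d(\alpha)$. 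Infinite kernel classes of $\alpha$ are accommodated by the infinite fibres of $\eta$; a nearly injective $\alpha$ (finite collapse) by a suitable power of $\gamma$; and $\zeta$, which combines infinite collapse with finite defect, is brought in — if necessary composed with $\gamma$ or $\eta$ and realigned by a permutation — to realize finite-defect targets that still carry infinite collapse, the case in which $\eta$ alone can fail because its own defect may equal $\aleph_{0}$. Once $\ker(\delta\mu)=\ker\alpha$, $rank(\delta\mu)=\aleph_{0}$, and $im\,\delta\mu$ has the correct co-cardinality, a permutation $\tau$ gives $\delta\mu\tau=\alpha$. Then $Inf(X)=A\cup B\subseteq S$ and, with $F(X)\subseteq S$, we conclude $S=T(X)$.

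I expect this last step to be the main obstacle. Since only the fixed representatives $\gamma,\eta,\zeta$ are available, and we cannot prescribe their kernels or defects, while $\alpha$ ranges over all infinite-rank maps, the proof becomes a case analysis on the sizes of the kernel classes of $\alpha$ and on $d(\alpha)$, in which one must choose for each case an appropriate product of $\gamma,\eta,\zeta$ and exploit permutations to realign fibres; the delicate point is matching an arbitrary collapse pattern — including infinite kernel classes — against a prescribed finite or infinite defect, and then verifying that every factor used really lies in $S$, that is, in $Inj(X)$, in $Sym(X)$, or in the subsemigroup generated by $\gamma,\eta,\zeta$.
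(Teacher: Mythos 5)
Your first two steps are sound: the factorization $\sigma=\delta\gamma$ with $\delta=\sigma(\gamma|_{T})^{-1}\in Inj(X)$ is the correct mirror image of the $Sym(X)$ argument in Lemma \ref{lmm2}, and your direct construction of $F(X)\subseteq S$ works (note, however, that you cannot ``instead invoke Lemma \ref{lmm1}'': $S$ is not assumed maximal here, so keep the direct construction). The genuine gap is the third step, which you yourself flag as ``the main obstacle'': you never actually produce the decomposition $\alpha=\delta\mu\tau$, and the hardest instance is already $\alpha\in Sur(X)$ with all kernel classes finite but $c(\alpha)=\aleph_{0}$. Since a final permutation $\tau$ cannot change the defect, the product $\delta\mu$ must itself be surjective with infinite collapse; but none of your three representatives need have this property: $\gamma\in Sur(X)\cap S$ may have finite collapse (and every power $\gamma^{r}$ then still has finite collapse), $\eta\in C_{p}(X)\cap S$ may have infinite defect or fail to be surjective, and $\zeta\in IF(X)\cap S$ has infinite collapse but is in general not surjective. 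Saying that $\zeta$ is ``brought in, if necessary composed with $\gamma$ or $\eta$ and realigned by a permutation'' names the right ingredients but is not a proof.

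The paper closes exactly this gap with a concrete splice and then stops: it proves only $Sur(X)\subseteq S$ and hands everything else to Lemma \ref{lmm2}, which applies because $Inj(X)\subseteq S$ already gives $Inj(X)\cap S\neq\emptyset$ and $FI(X)\cap S\supseteq FI(X)\cap Inj(X)\neq\emptyset$; there is no need to re-derive $Inf(X)\subseteq S$ from scratch. The splice is: pass to a power $\gamma^{r}$ with $c(\gamma^{r})>d(\zeta)$, choose a transversal $A$ of $\ker\gamma^{r}$, and take an injection $\gamma_{1}$ carrying $im\,\zeta$ bijectively onto $A$ and $D(\zeta)$ into $X\setminus A$; then $\zeta\gamma_{1}\gamma^{r}\in Sur(X)\cap S$ has infinite collapse. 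For an arbitrary $\alpha\in Sur(X)$ one then encodes $\ker\alpha$ into the infinitely many infinite fibres of $\eta$ by one injection and decodes through a second injection into a transversal of the kernel of that infinite-collapse surjection, exhibiting $\alpha$ as a product of elements of $Inj(X)\cup S$. If you add this construction (or an equivalent one) your outline becomes a proof; without it, the case analysis you defer is precisely where the lemma lives.
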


\begin{proof}
We show that then $Sur(X)\subset S$. If we have $Sur(X)\subset S$ then from $%
Inj(X)\cap S\neq \emptyset $ and $FI(X)\cap S\neq \emptyset $ (because of $%
Inj(X)\subset $ $S$) it follow $S=T(X)$ by Lemma \ref{lmm2}.

Let $\alpha \in Sur(X)$. Moreover, there is a $\beta \in
C_{p}(X)\cap S$. Then there is a bijection
\[
f:X/\ker \alpha \rightarrow \{x\beta ^{-1}\mid x\in K(\beta
)\}\text{.}
\]%
For each $\overline{x}\in X/\ker \alpha $, there is an injective mapping%
\[
f_{\overline{x}}:\overline{x}\rightarrow f(\overline{x})\text{.}
\]%
We take the $\gamma \in Inj(X)$ with $i\gamma =f_{\overline{x}}(i)$ where $%
i\in \overline{x}$ for $\overline{x}\in X/\ker \alpha $. There are $\delta \in Sur(X)\cap S$ and $%
\eta \in IF(X)\cap S$. If $c(\delta )<\aleph _{0}$ then from
$c(\delta )>0$,
it follows $c(\delta ^{r})>d(\eta )$ for some $r\in \mathbb{N}$, where $%
\delta ^{r}\in Sur(X)\cap S$. Hence, we can assume that $c(\delta
)>d(\eta )$
and there is a set $A\subseteq X$ with $A\#\ker \delta $ and a bijection%
\[
h_{1}:im\eta \rightarrow A
\]%
and an injective mapping%
\[
h_{2}:D(\eta )\rightarrow X\setminus A\text{.}
\]%
We take the $\gamma _{1}\in Inj(X)$ with $i\gamma _{1}=h_{1}(i)$ for
$i\in im\eta $ and $i\gamma _{1}=h_{2}(i)$ for $i\in D(\eta )$.
Clearly, $\eta \gamma _{1}\delta \in Sur(X)\cap S$ with $c(\eta
\gamma _{1}\delta )=\aleph _{0}$.
So, we can assume that $c(\delta )=\aleph _{0}$. For $i,j\in X$, $%
i\alpha =j\alpha $ if and only if there is an $\overline{x}\in
X/\ker
\alpha $ with $i,j\in \overline{x}$, i.e. $f_{\overline{x}}(i)\beta =f_{%
\overline{x}}(j)\beta $. But $f_{\overline{x}}(i)\beta =f_{\overline{x}%
}(j)\beta $ is equivalent to $i\gamma \beta =j\gamma \beta $,
consequently, we have $i\gamma \beta =j\gamma \beta $ if and only if
$i\alpha =j\alpha $.
Further, let $B\subseteq X$ with $B\#\ker \delta $ and%
\[
\varphi :D(\gamma \beta )\rightarrow X\setminus B
\]%
be a injective mapping. Then the transformation $\gamma _{2}$ on $X$ with $%
i\gamma \beta \gamma _{2}$ is the unique element in $i\alpha \delta
^{-1}\cap B$ for $i\in X$ and $i\gamma _{2}=\varphi (i)$ for $i\in
D(\gamma \beta )$ belongs to $Inj(X)$. So, we have $i\gamma \beta
\gamma _{2}\delta =i\alpha \delta ^{-1}\delta =i\alpha $ for $i\in
X$. This shows that $\gamma \beta \gamma _{2}\delta =\alpha $, and
consequently, $Sur(X)\subset S$.
\end{proof}

Now we are able to characterize the maximal subsemigroups of $T(X)$
containing $Inj(X)$ and $Sur(X)$, respectively.

\begin{theorem}
Let $Sur(X)\subset $ $S\leq $ $T(X)$. Then $S$ is maximal iff $%
S=T(X)\setminus Inj(X)$ or $S=T(X)\setminus FI(X)$.
\end{theorem}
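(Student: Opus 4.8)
The plan is to prove the two implications separately, and since the real work has already been done in Proposition~\ref{prop1} and Lemma~\ref{lmm2}, both directions should be short.

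For the implication ``$S=T(X)\setminus Inj(X)$ or $S=T(X)\setminus FI(X)$ $\Rightarrow$ $S$ maximal'', I would just invoke Proposition~\ref{prop1}, which asserts precisely that $T(X)\setminus Inj(X)$ and $T(X)\setminus FI(X)$ are maximal subsemigroups of $T(X)$. The only point worth recording, for consistency with the standing hypothesis $Sur(X)\subset S$, is that $Sur(X)$ is (properly) contained in each of these two sets: every $\alpha\in Sur(X)$ has $c(\alpha)\neq 0$, so $\alpha\notin Inj(X)$, and $d(\alpha)=0$, so $\alpha\notin FI(X)$; and, e.g., the identity lies in both complements but not in $Sur(X)$.

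For the converse, assume $S$ is maximal, so in particular $S\neq T(X)$. I would split according to whether $Inj(X)\cap S=\emptyset$. If $Inj(X)\cap S=\emptyset$, then $S\subseteq T(X)\setminus Inj(X)$; by Proposition~\ref{prop1} the set $T(X)\setminus Inj(X)$ is a subsemigroup of $T(X)$, and it is proper, so maximality of $S$ forces $S=T(X)\setminus Inj(X)$. If instead $Inj(X)\cap S\neq\emptyset$, I claim $FI(X)\cap S=\emptyset$: otherwise the three conditions $Sur(X)\subset S$, $Inj(X)\cap S\neq\emptyset$, $FI(X)\cap S\neq\emptyset$ place us exactly in the hypothesis of Lemma~\ref{lmm2}, giving $S=T(X)$, contradicting maximality. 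Hence $S\subseteq T(X)\setminus FI(X)$, which by Proposition~\ref{prop1} is again a proper subsemigroup of $T(X)$, so maximality yields $S=T(X)\setminus FI(X)$.

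The only substantive input is Lemma~\ref{lmm2}, which excludes the ``mixed'' configuration in which $S$ meets both $Inj(X)$ and $FI(X)$; with that in hand the remaining steps are just the routine fact that a maximal subsemigroup contained in a proper subsemigroup must equal it, together with the elementary disjointness $Sur(X)\cap Inj(X)=\emptyset$ and $Sur(X)\cap FI(X)=\emptyset$. I therefore do not anticipate a genuine obstacle; the only things to be careful about are that the case split on $Inj(X)\cap S$ is exhaustive and that $T(X)\setminus Inj(X)$ (resp.\ $T(X)\setminus FI(X)$) is indeed $\neq T(X)$ in the respective case.
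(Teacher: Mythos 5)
Your proposal is correct and follows essentially the same route as the paper: both directions rest on Proposition~\ref{prop1} for the maximality of $T(X)\setminus Inj(X)$ and $T(X)\setminus FI(X)$, and on Lemma~\ref{lmm2} (in contrapositive form) to conclude that a maximal $S$ containing $Sur(X)$ must miss $Inj(X)$ or miss $FI(X)$, whence equality by maximality. Your explicit case split and the check that $Sur(X)$ lies in both complements merely spell out details the paper leaves implicit.
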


\begin{proof}
By Proposition \ref{prop1}, both $T(X)\setminus Inj(X)$ and
$T(X)\setminus FI(X)$ are maximal
subsemigroups of $T(X)$. Suppose that $S$ is a maximal subsemigroup of $T(X)$%
. Then $Inj(X)\cap S=\emptyset $ or $FI(X)\cap S=\emptyset $ by
Lemma \ref{lmm2}, i.e. $S\subseteq T(X)\setminus Inj(X)$ or
$S\subseteq T(X)\setminus FI(X)$ and thus $S=T(X)\setminus Inj(X)$
or $S=T(X)\setminus FI(X)$ because of the maximality of $S$.
\end{proof}

\begin{theorem}
Let $Inj(X)\subset $ $S\leq $ $T(X)$. Then $S$ is maximal iff $%
S=T(X)\setminus H$ for some $H\in \{Sur(X),C_{p}(X),IF(X)\}.$
\end{theorem}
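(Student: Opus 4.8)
The plan is to mirror the proof of the preceding theorem about $Sur(X)$, splitting the equivalence into its two implications and outsourcing all the real work to Proposition \ref{prop1} and Lemma \ref{lmm3}.

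For the "if" direction, I would assume $S=T(X)\setminus H$ with $H\in\{Sur(X),C_{p}(X),IF(X)\}$ and simply invoke Proposition \ref{prop1}, which already states that each such $T(X)\setminus H$ is a maximal subsemigroup of $T(X)$. The only thing worth a line here is to note that the standing hypothesis $Inj(X)\subset S$ is automatically consistent: if $\alpha\in Inj(X)$ then $c(\alpha)=0$, hence also $k(\alpha)=0$, so $\alpha$ lies in none of $Sur(X)$, $C_{p}(X)$, $IF(X)$ (these force $c(\alpha)\neq 0$, $k(\alpha)=\aleph_0$, $c(\alpha)=\aleph_0$ respectively). Thus $Inj(X)\subseteq T(X)\setminus H$, so the statement is not vacuous.

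For the "only if" direction, suppose $S$ is maximal with $Inj(X)\subset S$. Since $S$ is maximal it is in particular a proper subsemigroup, so $S\neq T(X)$; by the contrapositive of Lemma \ref{lmm3} this forces $H\cap S=\emptyset$ for at least one $H\in\{Sur(X),C_{p}(X),IF(X)\}$. Fix such an $H$. Then $S\subseteq T(X)\setminus H$, and $T(X)\setminus H$ is a proper subsemigroup of $T(X)$ (indeed a maximal one, by Proposition \ref{prop1}). Maximality of $S$ therefore yields $S=T(X)\setminus H$, which is exactly the desired form.

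I do not anticipate a genuine obstacle in this argument: everything delicate has been front-loaded into Lemma \ref{lmm3} (the constructions of $\gamma$, $\gamma_1$, $\gamma_2$, and the trick of passing to a power $\delta^r$ to make the collapse exceed $d(\eta)$) and into Proposition \ref{prop1}. The only two points requiring care are the bookkeeping observation that $Inj(X)$ really does sit inside each $T(X)\setminus H$, and the reminder that "maximal subsemigroup" includes properness, so that the trichotomy supplied by Lemma \ref{lmm3} can be applied.
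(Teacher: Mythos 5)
Your proposal is correct and follows essentially the same route as the paper: the ``if'' direction is Proposition \ref{prop1}, and the ``only if'' direction applies Lemma \ref{lmm3} in contrapositive form to get $H\cap S=\emptyset$ for some $H\in\{Sur(X),C_{p}(X),IF(X)\}$ and then concludes by maximality. Your extra check that $Inj(X)$ sits inside each $T(X)\setminus H$ is a harmless addition the paper leaves implicit.
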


\begin{proof}
By Proposition \ref{prop1}, $T(X)\setminus H$ ($H\in
\{Sur(X),C_{p}(X),IF(X)\}$) are maximal subsemigroups of $T(X)$. If
$S$ is a maximal subsemigroup of $T(X)$ then $H\cap S=\emptyset $
for some $H\in \{Sur(X),C_{p}(X),IF(X)\}$ by Lemma \ref{lmm3}, i.e.
$S\subseteq T(X)\setminus H$ for some $H\in
\{Sur(X),C_{p}(X),IF(X)\} $. The maximality of $S$ provides the
assertion.
\end{proof}

Finally, we want to determine the maximal subsemigroups of $T(X)$
containing $H$ for $H\in\{C_{p}(X),IF(X),FI(X)\}$ using Theorem
\ref{thm1}. First we state that $FI(X)$ as well as $IF(X)$ are
subsemigroups of $T(X)$.

\begin{lemma}
$FI(X)$ is a subsemigroup of $T(X)$. \label{lmm4}
\end{lemma}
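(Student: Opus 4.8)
The plan is to take two arbitrary elements $\alpha ,\beta \in FI(X)$ and verify directly that $\alpha \beta$ again satisfies the three conditions defining $FI(X)$, namely $rank(\alpha \beta )=\aleph _{0}$, $d(\alpha \beta )=\aleph _{0}$ and $c(\alpha \beta )<\aleph _{0}$. None of these will require a construction of auxiliary transformations; each follows from a basic property of defect, collapse and rank together with the countability of $X$.

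First I would dispose of the collapse and the defect. For the collapse we use the subadditivity $c(\alpha \beta )\leq c(\alpha )+c(\beta )$ recalled above (see \cite{[1]}): since $c(\alpha ),c(\beta )<\aleph _{0}$, their sum is finite, so $c(\alpha \beta )<\aleph _{0}$. For the defect we note that $im(\alpha \beta )=(im\alpha )\beta \subseteq im\beta$, hence $D(\beta )=X\setminus im\beta \subseteq X\setminus im(\alpha \beta )=D(\alpha \beta )$, so $d(\alpha \beta )\geq d(\beta )=\aleph _{0}$; since $X$ is countable we also have $d(\alpha \beta )\leq \aleph _{0}$, whence $d(\alpha \beta )=\aleph _{0}$.

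It remains to check $rank(\alpha \beta )=\aleph _{0}$. Here I would argue by contradiction: if $rank(\alpha \beta )$ were finite, then $X=\bigcup _{y\in im(\alpha \beta )}y(\alpha \beta )^{-1}$ would be a finite union of nonempty sets whose union is infinite, so at least one preimage $y(\alpha \beta )^{-1}$ would be infinite, giving $c(\alpha \beta )\geq |y(\alpha \beta )^{-1}|-1=\aleph _{0}$, contradicting the previous step. Hence $im(\alpha \beta )$ is infinite, and being a subset of the countable set $X$ it has cardinality $\aleph _{0}$. Combining the three facts yields $\alpha \beta \in FI(X)$, so $FI(X)$ is closed under composition.

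There is no serious obstacle here: the defect can only increase under composition, the collapse stays finite by subadditivity, and on an infinite set a finite rank forces an infinite collapse. The only point needing a little care is the rank step, where one must invoke the finiteness of $c(\alpha \beta )$ just established rather than any property of $\alpha$ or $\beta$ separately, since the bound $rank(\alpha \beta )\leq rank(\beta )=\aleph _{0}$ provides only the trivial upper estimate.
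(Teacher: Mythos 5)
Your proof is correct and follows essentially the same route as the paper: finiteness of $c(\alpha\beta)$ via subadditivity of the collapse, and $d(\alpha\beta)\geq d(\beta)=\aleph_0$ via $im(\alpha\beta)\subseteq im\beta$. Your additional verification that $rank(\alpha\beta)=\aleph_0$ (a finite rank on an infinite set would force infinite collapse) is a point the paper leaves implicit, and your argument for it is sound.
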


\begin{proof}
Let $\alpha ,\beta \in FI(X)$. Then we have $c(\alpha \beta )\leq
c(\alpha )+c(\beta )<\aleph _{0}+\aleph _{0}=\aleph _{0}$ and
$\aleph _{0}=d(\beta )\leq d(\alpha \beta )$. This shows that
$\alpha \beta \in FI(X)$.
\end{proof}

\begin{lemma}
$IF(X)$ is a subsemigroup of $T(X)$. \label{lmm6}
\end{lemma}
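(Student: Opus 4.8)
The plan is to show that $IF(X)$ is closed under composition by verifying, for arbitrary $\alpha,\beta\in IF(X)$, the three defining conditions for the product $\alpha\beta$: that $d(\alpha\beta)<\aleph_0$, that $c(\alpha\beta)=\aleph_0$, and that $rank(\alpha\beta)=\aleph_0$. The defect condition is immediate: since $d(\alpha),d(\beta)<\aleph_0$, the inequality $d(\alpha\beta)\le d(\alpha)+d(\beta)$ recalled before Proposition \ref{prop1} gives $d(\alpha\beta)<\aleph_0$. The rank condition then follows for free, because $d(\alpha\beta)<\aleph_0$ means that $im(\alpha\beta)$ is cofinite in the countably infinite set $X$, hence $rank(\alpha\beta)=|im(\alpha\beta)|=\aleph_0$.

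The one point requiring an argument is $c(\alpha\beta)=\aleph_0$. The upper bound $c(\alpha\beta)\le\aleph_0$ is automatic since $|X|=\aleph_0$. For the lower bound I would use the inclusion $\ker\alpha\subseteq\ker(\alpha\beta)$ (if $x\alpha=y\alpha$ then $x\alpha\beta=y\alpha\beta$), together with the observation that $c(\gamma)=|X\setminus A|$ for any transversal $A$ of $\gamma$ — each $\gamma$-class of $X$ contributes exactly one element to $A$ and $|y\gamma^{-1}|-1$ elements to $X\setminus A$. Since every $\ker(\alpha\beta)$-class is a union of $\ker\alpha$-classes, a transversal $A$ of $\alpha\beta$ extends to a transversal $A'$ of $\alpha$ with $A\subseteq A'$; hence $c(\alpha)=|X\setminus A'|\le|X\setminus A|=c(\alpha\beta)$, so $c(\alpha\beta)\ge c(\alpha)=\aleph_0$ and therefore $c(\alpha\beta)=\aleph_0$. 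This is precisely the monotonicity $c(\alpha\beta)\ge c(\alpha)$ used implicitly in case 5 of the proof of Proposition \ref{prop1}.

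Putting the pieces together, $\alpha\beta$ satisfies $rank(\alpha\beta)=\aleph_0$, $c(\alpha\beta)=\aleph_0$ and $d(\alpha\beta)<\aleph_0$, i.e. $\alpha\beta\in IF(X)$, so $IF(X)$ is a subsemigroup of $T(X)$. The only step that is not pure bookkeeping with the known inequalities for $d$, $c$ and $rank$ is the lower bound $c(\alpha\beta)\ge c(\alpha)$; there one must be mildly careful that the transversal description of the collapse remains valid for infinite $X$ and that the cardinal sums behave as expected, but no genuine obstacle appears.
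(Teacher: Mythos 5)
Your proof is correct and takes essentially the same route as the paper: $d(\alpha\beta)\le d(\alpha)+d(\beta)<\aleph_0$ handles the defect, and a lower bound on the collapse of the product by the collapse of one factor gives $c(\alpha\beta)=\aleph_0$. The only (minor) difference is that you bound $c(\alpha\beta)$ from below by $c(\alpha)$ --- an inequality valid unconditionally, which you justify carefully via $\ker\alpha\subseteq\ker(\alpha\beta)$ and transversals --- whereas the paper writes $c(\beta)\le c(\alpha\beta)$, which in general requires $d(\alpha)<\aleph_0$; since both collapses equal $\aleph_0$ here, either choice works.
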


\begin{proof}
Let $\alpha ,\beta \in IF(X)$. Then we have $d(\alpha \beta )\leq
d(\alpha )+d(\beta )<\aleph _{0}+\aleph _{0}=\aleph _{0}$ and
$\aleph _{0}=c(\beta )\leq c(\alpha \beta )$. This shows that
$\alpha \beta \in IF(X)$.
\end{proof}

Let us consider the set $C_{p}(X)\cap Sur(X)$. Then we have:

\begin{lemma}
We have $\left\langle FI(X),\alpha \right\rangle =T(X)$ for all
$\alpha \in C_{p}(X)\cap Sur(X)$. \label{lmm5}
\end{lemma}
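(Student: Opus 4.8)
The goal is to show that for $\alpha\in C_p(X)\cap Sur(X)$ the subsemigroup generated by $FI(X)$ together with $\alpha$ is all of $T(X)$. Since $F(X)\subseteq FI(X)$ is false in general — but $FI(X)$ does contain plenty of transformations of infinite rank — the real work is to reach every $\beta\in Inf(X)$, and in particular every element of $Sym(X)$, $Inj(X)$, $Sur(X)$, $C_p(X)$ and $IF(X)$. I would first record the relevant numerical data of $\alpha$: $\operatorname{rank}(\alpha)=\aleph_0$, $d(\alpha)=0$, $c(\alpha)\ne 0$, and $k(\alpha)=\aleph_0$, so $\alpha$ is surjective, properly collapsing, and collapses infinitely many fibres to infinite size. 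The strategy is the familiar ``sandwich'' technique used already in the proof of Proposition~\ref{prop1} and Lemmas~\ref{lmm2}, \ref{lmm3}: given a target $\beta$, write $\beta=\gamma\,\alpha\,\delta$ (or a longer such product) with $\gamma,\delta\in FI(X)$ chosen so that $\gamma\alpha$ has the right kernel and $\delta$ corrects the image.

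First I would handle $\beta\in Sym(X)$. Pick $\gamma\in T(X)$ with $\ker\gamma=\ker\alpha$ built so that $\gamma$ has infinite defect and finite collapse — e.g. let $\gamma$ map each $\ker\alpha$-class to a single point of a set $A$ with $X\setminus A$ infinite; then $d(\gamma)=\aleph_0$ and, since $\alpha$ has nonzero collapse, $c(\gamma)$ can be kept finite, so $\gamma\in FI(X)$. Because $\ker\gamma\alpha=\ker\gamma=\ker\alpha$ and $\alpha$ is surjective, $\operatorname{im}\gamma\alpha$ is cofinite-or-worse; then choose $\delta\in FI(X)$ with $\operatorname{im}\alpha\,\#\ker\delta$ and $i\gamma\alpha\delta=i\beta$ on $\operatorname{im}\gamma\alpha$, sending the leftover points anywhere. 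Since $\beta$ is bijective the collapse of $\delta$ is controlled by $d$ of the earlier factors and stays finite while $d(\delta)=\aleph_0$, so $\delta\in FI(X)$; this gives $Sym(X)\subseteq\langle FI(X),\alpha\rangle$. From here, as in Lemma~\ref{lmm2}, multiplying elements of $FI(X)$ by suitable permutations on both sides lets one adjust images and kernels freely, and one shows $Inf(X)\subseteq\langle FI(X),\alpha\rangle$ by splitting $Inf(X)=A\cup B$ into the finite-defect and infinite-defect parts exactly as there: the infinite-defect part $B$ is essentially $FI(X)\cup(Sur(X)\cap\text{infinite-defect, impossible})$, so $B\setminus FI(X)$ consists of maps with finite collapse and infinite defect, all already in $FI(X)$, and for $A$ one uses a permutation times an element of $FI(X)$ times a surjection, the surjection being produced from $\alpha$ and a permutation. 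Finally $F(X)\subseteq\langle FI(X),\alpha\rangle$ follows because any rank-drop can be realised by post-composing an element of $FI(X)$ with something in $F(X)$-land reached from a surjection — or more cleanly, once $Sym(X)$ and one properly-collapsing surjection are in hand one generates all of $T(X)$ by the classical fact that $Sym(X)$ together with a single non-injective surjection generates $T(X)$; I would invoke exactly that, since $\alpha$ itself is such a surjection.

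The main obstacle I anticipate is bookkeeping the three invariants $d,c,k$ simultaneously when building the sandwich factors: the factor $\gamma$ with $\ker\gamma=\ker\alpha$ must land in $FI(X)$ (infinite defect, finite collapse), and this is exactly where $c(\alpha)\ne 0$ is used — if $\alpha$ were injective no such $\gamma$ could have finite collapse after composing, but the stated hypothesis $\alpha\in Sur(X)$-part guarantees $c(\alpha)\ne0$, and $k(\alpha)=\aleph_0$ gives extra room to absorb collapse into the infinite fibres. Likewise the corrector $\delta$ must have infinite defect, which is arranged by the transversal condition $\operatorname{im}\alpha\,\#\ker\delta$ together with $d(\alpha)=0$ forcing... actually here one must be slightly careful: $d(\alpha)=0$ means $\operatorname{im}\alpha=X$, so $\operatorname{im}\gamma\alpha$ need not be cofinite and one may have to let $\delta$ have large defect on its own, which is fine and indeed desired. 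Provided these two lemmas on existence of the factors are checked, the rest is routine and mirrors Lemmas~\ref{lmm2}--\ref{lmm3}.
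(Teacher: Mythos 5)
Your overall plan (sandwich factorizations, then bootstrap to all of $T(X)$) is workable in spirit, but as written it has several genuine errors, and it misses the shortcut that makes the paper's proof short.

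First, the construction for $\beta\in Sym(X)$ fails: you take $\gamma$ with $\ker\gamma=\ker\alpha$, but then $\ker(\gamma\alpha\delta)\supseteq\ker\gamma=\ker\alpha$, which is nontrivial because $\alpha\in Sur(X)$ forces $c(\alpha)\neq 0$; hence $\gamma\alpha\delta$ is never injective and cannot equal a permutation. The correct move is the opposite one: make $\gamma$ \emph{injective} with $im\gamma$ contained in a transversal $A$ of $\ker\alpha$. This is exactly what the paper does, and it does so for an arbitrary $\beta\in Inj(X)$ rather than only for $Sym(X)$: set $x\gamma$ to be the unique element of $x\beta\alpha^{-1}\cap A$; then $\gamma\alpha=\beta$, $\gamma$ is injective (so $c(\gamma)=0$), and $d(\gamma)\geq|X\setminus A|=\aleph_0$ because $k(\alpha)=\aleph_0$ guarantees that a transversal omits infinitely many points. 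Thus $\gamma\in FI(X)$ and $Inj(X)\subseteq\langle FI(X),\alpha\rangle$ in one step, with no corrector $\delta$ needed.

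Second, your closing appeal to ``the classical fact that $Sym(X)$ together with a single non-injective surjection generates $T(X)$'' is false for infinite $X$: $T(X)\setminus Inj(X)$ is a subsemigroup (indeed one of the five maximal ones in Proposition \ref{prop1}) containing $Sym(X)$ and every non-injective map, so $\langle Sym(X),\alpha\rangle\subseteq T(X)\setminus Inj(X)\neq T(X)$. That fact holds for finite $X$ only. Relatedly, your description of the infinite-defect part $B$ is wrong: $B$ contains maps with $c=\aleph_0$ \emph{and} $d=\aleph_0$, which lie in none of $FI(X)$, $IF(X)$, and are not ``already in $FI(X)$''; your argument never reaches them. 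The paper avoids all of this bookkeeping by invoking Lemma \ref{lmm3}: once $Inj(X)\subseteq\langle FI(X),\alpha\rangle$, it suffices to note that $\alpha$ itself meets $Sur(X)$, $C_p(X)$ and $IF(X)$ (since $k(\alpha)=\aleph_0$ implies $c(\alpha)=\aleph_0$ and $d(\alpha)=0$), and Lemma \ref{lmm3} immediately yields $\langle FI(X),\alpha\rangle=T(X)$. You should replace your bootstrap by that lemma.
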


\begin{proof}
Let $\alpha \in C_{p}(X)\cap Sur(X)$, $\beta \in Inj(X)$, and
$A\subseteq X$ be a transversal of $\alpha $. We put $\gamma \in
T(X)$ setting $x\gamma$ is the unique element in $x\beta \alpha
^{-1}\cap A$ for all $x\in X$. It is easy to verify that $im\gamma
\subseteq A$ and $d(\gamma )=\left\vert X\setminus im\gamma
\right\vert \geq \left\vert X\setminus A\right\vert
=\aleph _{0}$. Let $x,y\in X$ with $x\gamma =y\gamma $. This implies $%
(x\beta \alpha ^{-1}\cap A)\alpha =(y\beta \alpha ^{-1}\cap A)\alpha $, $%
x\beta =y\beta $, and $x=y$ since $\beta \in Inj(X)$. Thus $\gamma
\in Inj(X) $ and $c(\gamma )=0<\aleph _{0}$. Consequently, $\gamma
\in FI(X)$. Because of $x\gamma \alpha =(x\beta \alpha ^{-1}\cap
A)\alpha =x\beta $ for all $x\in X$, we have $\beta =\gamma \alpha
\in \left\langle FI(X),\alpha \right\rangle $. This shows that
$Inj(X)\subseteq \left\langle FI(X),\alpha \right\rangle $.
Moreover, $\left\langle FI(X),\alpha \right\rangle \cap
H\neq \emptyset $ for $H\in \{Sur(X),C_{p}(X),IF(X)\}$. By Lemma \ref{lmm3}, we have $%
\left\langle FI(X),\alpha \right\rangle =T(X)$.
\end{proof}

Lemma \ref{lmm4}, Lemma \ref{lmm5} and Theorem \ref{thm1} imply:

\begin{proposition}
Let $S\leq T(X)$ with $FI(X)\subseteq S$. Then the following
statements are equivalent:\newline (i) $S$ is maximal.\newline (ii)
$S=T(X)\setminus H$ for some $H\in \mathcal{H}(C_{p}(X)\cap
Sur(X),FI(X))$.
\end{proposition}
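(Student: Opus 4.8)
The plan is to derive the statement directly from Theorem~\ref{thm1}, taking $W:=FI(X)$ and $U:=C_{p}(X)\cap Sur(X)$; the whole task then reduces to checking that the hypotheses of that theorem are in force. First, $FI(X)$ is a subsemigroup of $T(X)$ by Lemma~\ref{lmm4}, and since $S$ is a subsemigroup with $FI(X)\subseteq S\subseteq T(X)$, the chain $W\leq S\leq T(X)$ holds.

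Next I would check the disjointness condition $U\cap W=\emptyset$. Every $\alpha\in Sur(X)$ satisfies $d(\alpha)=0$, whereas every $\alpha\in FI(X)$ satisfies $d(\alpha)=\aleph_{0}$; hence $\bigl(C_{p}(X)\cap Sur(X)\bigr)\cap FI(X)=\emptyset$. Since Theorem~\ref{thm1} is only meaningful when $U\neq\emptyset$, I would also record that $C_{p}(X)\cap Sur(X)\neq\emptyset$: fix a partition $X=\bigcup_{i\in\mathbb{N}}C_{i}$ into countably many pairwise disjoint infinite blocks and let $\alpha$ map every element of $C_{i}$ to $i$; then $rank(\alpha)=\aleph_{0}$, $d(\alpha)=0$, $c(\alpha)=\aleph_{0}$ and $k(\alpha)=\aleph_{0}$, so $\alpha\in C_{p}(X)\cap Sur(X)$.

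The last and only substantial hypothesis is the generation condition $\langle W,\alpha\rangle=\langle FI(X),\alpha\rangle=T(X)$ for every $\alpha\in U=C_{p}(X)\cap Sur(X)$, and this is precisely Lemma~\ref{lmm5}. With all hypotheses verified, Theorem~\ref{thm1} yields that $S$ is maximal if and only if $S=T(X)\setminus H$ for some $H\in\mathcal{H}(U,W)=\mathcal{H}(C_{p}(X)\cap Sur(X),FI(X))$, which is exactly the asserted equivalence.

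There is essentially no obstacle here beyond matching notation: the proposition is a bookkeeping corollary of Theorem~\ref{thm1}. The one genuinely nontrivial ingredient — that $FI(X)$ together with a single element of $C_{p}(X)\cap Sur(X)$ generates $T(X)$ — has already been isolated as Lemma~\ref{lmm5} (which itself reduces to Lemma~\ref{lmm3}). If anything requires care, it is only making sure the side conditions $U\cap W=\emptyset$ and $U\neq\emptyset$ are literally satisfied, and both are elementary.
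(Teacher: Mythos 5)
Your proposal is correct and matches the paper exactly: the authors likewise obtain this proposition as an immediate corollary of Theorem~\ref{thm1} applied with $W=FI(X)$ and $U=C_{p}(X)\cap Sur(X)$, citing Lemma~\ref{lmm4} for the subsemigroup hypothesis and Lemma~\ref{lmm5} for the generation hypothesis. Your extra checks (disjointness of $U$ and $W$, and nonemptiness of $U$ via an explicit example) are sound and only make explicit what the paper leaves tacit.
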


Now, we consider the set $FI(X)\cup Inj(X)$. Here, we get:

\begin{lemma}
We have $\left\langle IF(X),\alpha \right\rangle =T(X)$ for all
$\alpha \in FI(X)\cap Inj(X)$. \label{lmm7}
\end{lemma}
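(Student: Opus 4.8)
The plan is to mirror the proof of Lemma \ref{lmm5}: show that $Inj(X)\subseteq\langle IF(X),\alpha\rangle$ and then quote Lemma \ref{lmm3}. Write $S:=\langle IF(X),\alpha\rangle$. Note first that $\alpha\in FI(X)\cap Inj(X)$ simply says that $\alpha$ is injective with $rank(\alpha)=\aleph_0$ and $d(\alpha)=\aleph_0$; the infinite defect is exactly what will make the construction below work.

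Fix $\beta\in Inj(X)$, so $\beta$ is injective with $|X\setminus im\beta|=\aleph_0$. I would construct a single $\gamma\in IF(X)$ with $\alpha\gamma=\beta$ as follows. On $im\alpha$ put $y\gamma:=(y\alpha^{-1})\beta$; this is well defined because $\alpha$ is injective, it is injective on $im\alpha$, and its image is $im\beta$. It already forces $\alpha\gamma=\beta$, since $x\alpha\gamma=(x\alpha)\alpha^{-1}\beta=x\beta$ for every $x\in X$. On the infinite set $X\setminus im\alpha$, whose values do not affect the product $\alpha\gamma$, let $\gamma$ be any surjection onto $X\setminus im\beta$ with infinite collapse; this is possible because $|X\setminus im\alpha|=|X\setminus im\beta|=\aleph_0$, e.g. fold $X\setminus im\alpha$ two-to-one onto $X\setminus im\beta$. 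Then $im\gamma=im\beta\cup(X\setminus im\beta)=X$, so $d(\gamma)=0<\aleph_0$, while $c(\gamma)=\aleph_0$ and $rank(\gamma)=\aleph_0$; hence $\gamma\in IF(X)\subseteq S$, and therefore $\beta=\alpha\gamma\in S$. Since $\beta\in Inj(X)$ was arbitrary, $Inj(X)\subseteq S$.

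It then remains to verify the hypotheses of Lemma \ref{lmm3} for $S$. We have $Inj(X)\subseteq S$, properly since $IF(X)\subseteq S$ and $IF(X)\not\subseteq Inj(X)$. Moreover $S\cap H\neq\emptyset$ for each $H\in\{Sur(X),C_{p}(X),IF(X)\}$: this is immediate for $IF(X)$, and $IF(X)$ itself already contains members of $Sur(X)$ (a rank-$\aleph_0$ surjection with infinite collapse, such as the $\gamma$ above) and of $C_{p}(X)$ (a rank-$\aleph_0$ surjection with $\aleph_0$ infinite fibres, which automatically has infinite collapse and zero defect). Lemma \ref{lmm3} then gives $S=T(X)$, as required. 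The one point that needs care is that the $\gamma$ above must genuinely land in $IF(X)$, i.e. have finite defect together with infinite collapse; this is precisely where the hypothesis $d(\alpha)=\aleph_0$ is used, since it is the infinite ``free'' part $X\setminus im\alpha$ of the domain that is used to absorb the infinite co-image $X\setminus im\beta$ while simultaneously producing the collapse — the remaining verifications are the same routine bookkeeping as in Lemma \ref{lmm5}.
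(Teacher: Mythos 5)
Your construction is essentially identical to the paper's: build a single $\gamma$ with $\alpha\gamma=\beta$ that copies $\beta$ on $im\,\alpha$ and uses the infinite defect $D(\alpha)$ to cover $X\setminus im\,\beta$ with infinite collapse, then invoke Lemma \ref{lmm3}. One small slip: for $\beta\in Inj(X)$ the definition only gives $d(\beta)\neq 0$, not $d(\beta)=\aleph_0$, so your ``two-to-one fold of $X\setminus im\,\alpha$ onto $X\setminus im\,\beta$'' need not literally exist; but any surjection of the infinite set $X\setminus im\,\alpha$ onto the nonempty (possibly finite) set $X\setminus im\,\beta$ still has, or can be chosen to have, infinite collapse, so the argument survives with this trivial repair.
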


\begin{proof}
Let $\alpha \in FI(X)\cap Inj(X)$ and $\beta \in Inj(X)$. We put
$\gamma \in
T(X)$ setting%
\[
\begin{array}{c}
x\alpha \gamma :=x\beta \text{ for }x\in X \\
x\gamma :=f(x)\text{ for }x\in D(\alpha)
\end{array}%
\]%
where
\[f:D(\alpha) \rightarrow D(\beta)\cup x_{0}\alpha\]
is a surjective transformation such that
$|D(\alpha)\setminus\Sigma|=\aleph _{0}$ for some transversal
$\Sigma$ of $f$ and any fixed $x_{0}\in X$. Such a mapping exists
because of $d(\alpha )=\aleph _{0}$. Since $c(f)=\aleph _{0}$, we
have $c(\gamma )=\aleph _{0}$. Moreover, $im\gamma =\{x\gamma \mid
x\in X\}=\{x\gamma \mid x\in im\alpha \}\cup \{x\gamma \mid x\in
X\setminus im\alpha \}=im\beta \cup (X\setminus im\beta )\cup
\{x_{0}\alpha \}=X$. Hence $d(\gamma )=0$. This shows that $\gamma
\in IF(X)$. By definition, we have $\beta =\alpha \gamma \in
\left\langle IF(X),\alpha \right\rangle $. This shows that
$Inj(X)\subseteq \left\langle IF(X),\alpha \right\rangle $.
Moreover, $\left\langle IF(X),\alpha \right\rangle \cap H\neq
\emptyset $ for $H\in \{Sur(X),C_{p}(X),IF(X)\}$. By Lemma
\ref{lmm3}, we have $\left\langle IF(X),\alpha \right\rangle =T(X)$.
\end{proof}

Lemma \ref{lmm6}, Lemma \ref{lmm7} and Theorem \ref{thm1} imply:

\begin{proposition}
Let $S\leq T(X)$ with $IF(X)\subseteq S$. Then the following
statements are equivalent:\newline (i) $S$ is maximal.\newline (ii)
$S=T(X)\setminus H$ for some $H\in \mathcal{H}(Inj(X)\cap
FI(X),IF(X))$.
\end{proposition}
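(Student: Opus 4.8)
The plan is to derive this as a direct instance of Theorem~\ref{thm1}, exactly as the sentence preceding the statement announces: apply that theorem with $W:=IF(X)$ and $U:=Inj(X)\cap FI(X)$. The entire argument then consists in checking the three hypotheses of Theorem~\ref{thm1} for these choices and reading off the conclusion.

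First I would verify $W\leq S\leq T(X)$: Lemma~\ref{lmm6} asserts that $IF(X)$ is a subsemigroup of $T(X)$, and $IF(X)\subseteq S\leq T(X)$ is part of the hypothesis, so indeed $W\leq S\leq T(X)$. Next I would check $U\cap W=\emptyset$: every $\alpha\in Inj(X)$ satisfies $c(\alpha)=0$, whereas every $\alpha\in IF(X)$ satisfies $c(\alpha)=\aleph_0$, so $Inj(X)\cap IF(X)=\emptyset$ and hence $U\cap W\subseteq Inj(X)\cap IF(X)=\emptyset$. Finally, the generation requirement ``$\langle W,\alpha\rangle=T(X)$ for all $\alpha\in U$'' is precisely the content of Lemma~\ref{lmm7}. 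I would also note in passing that $U\neq\emptyset$ — for instance the map $n\mapsto 2n$ lies in $Inj(X)\cap FI(X)$ — so that the family $\mathcal{H}(U,W)$ is genuinely non-vacuous; this is not strictly needed for the formal deduction but makes clear that the characterisation in (ii) has content.

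Once the hypotheses are confirmed, Theorem~\ref{thm1} gives directly the equivalence of ``$S$ is maximal'' with ``$S=T(X)\setminus H$ for some $H\in\mathcal{H}(U,W)=\mathcal{H}(Inj(X)\cap FI(X),IF(X))$'', which are exactly statements (i) and (ii). I do not anticipate any genuine obstacle here: all the substantive work has already been packaged into Lemmas~\ref{lmm6} and~\ref{lmm7} together with Theorem~\ref{thm1}, and the only point that requires a moment's care is the disjointness $Inj(X)\cap IF(X)=\emptyset$, which is immediate from the collapse invariant $c(\cdot)$ separating the two classes. The proof is therefore essentially a one-paragraph verification of parameters.
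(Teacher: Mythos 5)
Your proposal is correct and follows exactly the route the paper intends: the paper derives this proposition by citing Lemma~\ref{lmm6} (so that $W=IF(X)$ is a subsemigroup), Lemma~\ref{lmm7} (the generation condition for all $\alpha\in Inj(X)\cap FI(X)$), and Theorem~\ref{thm1}, with the disjointness $Inj(X)\cap IF(X)=\emptyset$ being immediate from $c(\alpha)=0$ versus $c(\alpha)=\aleph_0$ just as you observe. Your write-up merely makes explicit the parameter checks that the paper leaves implicit.
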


\begin{lemma}
$\left\langle C_{p}(X)\right\rangle \cap (Inj(X)\cap
FI(X))=\emptyset $.
\end{lemma}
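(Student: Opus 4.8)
The plan is to argue entirely through the collapse invariant $c$, using the fact that $c$ can only increase along right multiplication.

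First I would record the following observation about $C_p(X)$: every $\alpha\in C_p(X)$ has $c(\alpha)=\aleph_0$. Indeed, $k(\alpha)=\aleph_0$ means the set $K(\alpha)=\{x\in im\alpha\mid |x\alpha^{-1}|=\aleph_0\}$ is infinite, in particular nonempty, so there is some $x\in im\alpha$ with $|x\alpha^{-1}|=\aleph_0$; this single point already forces $c(\alpha)=\sum_{y\in im\alpha}(|y\alpha^{-1}|-1)\ge\aleph_0$, and since trivially $c(\alpha)\le|X|=\aleph_0$ we get $c(\alpha)=\aleph_0$.

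Next I would take an arbitrary $\gamma\in\left\langle C_p(X)\right\rangle$ and write $\gamma=\alpha_1\alpha_2\cdots\alpha_n$ with $n\ge 1$ and each $\alpha_i\in C_p(X)$. The key point is that composing further maps on the right can only coarsen the kernel: if $a\alpha_1=b\alpha_1$ then $a\gamma=b\gamma$, so $\ker\alpha_1\subseteq\ker\gamma$. By the first observation pick $x\in im\alpha_1$ with $|x\alpha_1^{-1}|=\aleph_0$ and set $z:=x\alpha_2\cdots\alpha_n$ (read $z:=x$ if $n=1$). Every element of $x\alpha_1^{-1}$ is mapped by $\gamma$ to $z$, hence $x\alpha_1^{-1}\subseteq z\gamma^{-1}$, so $|z\gamma^{-1}|=\aleph_0$ and therefore $c(\gamma)\ge|z\gamma^{-1}|-1=\aleph_0$; in particular $c(\gamma)\neq 0$. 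Finally, since $Inj(X)\cap FI(X)$ consists of transformations with $c=0$ (already $Inj(X)$ requires $c(\alpha)=0$), no $\gamma\in\left\langle C_p(X)\right\rangle$ can lie in $Inj(X)\cap FI(X)$, which is exactly the claimed disjointness.

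The argument is genuinely short, so there is no real obstacle; the only step needing a little care is the implication \emph{finer kernel $\Rightarrow$ larger collapse}, which for transformations of an infinite set must not be handled by naive cardinal subtraction. The clean way, used above, is to isolate one infinite $\ker\alpha_1$-class and track it through the tail $\alpha_2\cdots\alpha_n$ of the product; everything else is immediate from the definitions.
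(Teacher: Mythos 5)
Your proof is correct and follows essentially the same route as the paper: both rest on the observation that every element of $C_p(X)$ has collapse $\aleph_0$ and that the collapse of a product is bounded below by the collapse of its leftmost factor, which is incompatible with membership in $Inj(X)\cap FI(X)$. The only difference is cosmetic --- the paper simply invokes the inequality $\aleph_0 = c(\alpha)\le c(\alpha\beta)$ for the two-factor case, whereas you justify the needed instance explicitly by tracking one infinite $\ker\alpha_1$-class through an arbitrary product.
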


\begin{proof}
Let $\alpha ,\beta \in T(X)$ with $c(\alpha )=c(\beta )=\aleph _{0}$. Then $%
\aleph _{0}=c(\alpha )\leq c(\alpha \beta )$, i.e. $c(\alpha \beta )=$ $%
\aleph _{0}$. Since $c(\alpha )=$ $\aleph _{0}$ for all $\alpha \in C_{p}(X)$%
, this shows that $\left\langle C_{p}(X)\right\rangle \cap
FI(X)=\emptyset $.
\end{proof}

Since we can decompose a countable set into countable many countable
sets, it is routine that each transformation $\alpha$ with $\exists
\overline{x}\in X/ker\alpha$ with $|\overline{x}|=\aleph_0$ can be
written as product $\beta\gamma$ of appropriate transformations
$\beta,\gamma\in C_p(X)$. Moreover, it is clear that $\{\alpha\in
T(X)|\exists \overline{x}\in X/ker\alpha$ with
$|\overline{x}|=\aleph_0\}$ is subsemigroup of $T(X)$. Hence
$\langle C_p(X)\rangle=\{\alpha\in T(X)|\exists \overline{x}\in
X/ker\alpha$ with $|\overline{x}|=\aleph_0\}$.

\begin{lemma}
We have $\left\langle C_{p}(X),\alpha \right\rangle =T(X)$ for all
$\alpha \in FI(X)\cap Inj(X)$.
\end{lemma}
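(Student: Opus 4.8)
The plan is to argue exactly as in the proofs of Lemma~\ref{lmm5} and Lemma~\ref{lmm7}: I would show that $S:=\left\langle C_{p}(X),\alpha \right\rangle $ contains $Inj(X)$ and that $S\cap H\neq \emptyset $ for every $H\in \{Sur(X),C_{p}(X),IF(X)\}$, and then invoke Lemma~\ref{lmm3} to conclude $S=T(X)$.

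The three intersection conditions are immediate. Choose $\delta \in C_{p}(X)$ which is moreover surjective --- for instance, partition $X$ into countably many infinite blocks and collapse each block onto a distinct point. Then $d(\delta )=0<\aleph _{0}$, and since $\delta $ has an infinite kernel class one has $c(\delta )=\aleph _{0}$; together with $rank(\delta )=\aleph _{0}$ this gives $\delta \in Sur(X)\cap C_{p}(X)\cap IF(X)$. As $C_{p}(X)\subseteq S$, this settles all three cases at once.

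The main step is to prove $Inj(X)\subseteq S$, mimicking the construction in the proof of Lemma~\ref{lmm7}. Let $\beta \in Inj(X)$. Since $\alpha \in FI(X)\cap Inj(X)$, the map $\alpha $ is injective and $D(\alpha )=X\setminus im\alpha $ is infinite; fix a partition $\{C_{n}\mid n\in \mathbb{N}\}$ of $D(\alpha )$ into infinitely many infinite sets and fix pairwise distinct points $p_{n}\in X$. Define $\gamma \in T(X)$ by $z\gamma :=(z\alpha ^{-1})\beta $ for $z\in im\alpha $ (well defined, and injective on $im\alpha $, because $\alpha $ and $\beta $ are injective) and $z\gamma :=p_{n}$ for $z\in C_{n}$. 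Then $im\beta \subseteq im\gamma $ forces $rank(\gamma )=\aleph _{0}$, while $\left\vert p_{n}\gamma ^{-1}\right\vert \geq \left\vert C_{n}\right\vert =\aleph _{0}$ for all $n$ forces $k(\gamma )=\aleph _{0}$, so $\gamma \in C_{p}(X)$. Finally $x\alpha \gamma =(x\alpha \alpha ^{-1})\beta =x\beta $ for all $x\in X$, i.e.\ $\beta =\alpha \gamma \in S$. Hence $Inj(X)\subseteq S$, and Lemma~\ref{lmm3} gives $S=T(X)$.

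The only point requiring care is checking that the auxiliary transformation $\gamma $ really lies in $C_{p}(X)$, that is, that it has infinitely many infinite kernel classes; this is exactly where the hypothesis $d(\alpha )=\aleph _{0}$ --- which is what $\alpha \in FI(X)\cap Inj(X)$ adds to plain injectivity --- is used, since it supplies an infinite set $D(\alpha )$ on which $\gamma $ may collapse as much as needed without disturbing the identity $\alpha \gamma =\beta $ on $im\alpha $. Everything else is routine bookkeeping.
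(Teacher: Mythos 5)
Your proof is correct and takes essentially the same route as the paper's: both show $Inj(X)\subseteq\left\langle C_{p}(X),\alpha\right\rangle$ by using the infinite defect $D(\alpha)$ to build a $\gamma\in C_{p}(X)$ with $\alpha\gamma=\beta$ (the paper folds each $k\alpha$ into an infinite kernel class $I_{k}\cup\{k\alpha\}$, you instead keep $\gamma$ injective on $im\alpha$ and collapse the pieces of $D(\alpha)$ separately, a cosmetic difference), and then both invoke Lemma~\ref{lmm3}. Your explicit witness for the three intersection conditions is a welcome extra detail the paper leaves implicit.
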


\begin{proof}
We show that $Inj(X)\subset \left\langle C_{p}(X),\alpha
\right\rangle $. If we have it then from $\left\langle
C_{p}(X),\alpha \right\rangle \cap H\neq \emptyset $ for $H\in
\{Sur(X),C_{p}(X),IF(X)\}$ it follows $\left\langle C_{p}(X),\alpha
\right\rangle =T(X)$ by Lemma \ref{lmm3}. For this let $\beta \in
Inj(X)$. Let $\alpha \in S\cap (FI(X)\cap Inj(X))$. Further let
$\{I_{k}\mid k\in X\}$ be a decomposition of $D(\alpha )$ in
infinitely many infinite subsets. Then we take $\gamma \in
C_{p}(X)\subset \left\langle C_{p}(X),\alpha \right\rangle $ with
$X^{+}/\ker \gamma =\{I_{k}\cup \{k\alpha \}\mid k\in X\}$ and
$i\gamma =k\beta $ for $i\in I_{k}\cup
\{k\alpha \}$ and $k\in X$. This provides $k\alpha \gamma =k\beta $ for $%
k\in X$. This shows $\beta =\alpha \gamma \in S$. Consequently, $%
Inj(X)\subset \left\langle C_{p}(X),\alpha \right\rangle $.
\end{proof}

\begin{proposition}
Let $S\leq T(X)$ with $C_{p}(X)\subseteq S$. Then the following
statements are equivalent:\newline (i) $S$ is maximal.\newline (ii)
$S=T(X)\setminus H$ for some $H\in \mathcal{H}(FI(X)\cap
Inj(X),\left\langle C_{p}(X)\right\rangle )$.
\end{proposition}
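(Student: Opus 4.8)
The plan is simply to recognize this Proposition as another instance of Theorem~\ref{thm1}, applied with $W:=\langle C_p(X)\rangle$ and $U:=FI(X)\cap Inj(X)$; then $\mathcal{H}(U,W)=\mathcal{H}(FI(X)\cap Inj(X),\langle C_p(X)\rangle)$, and the equivalence of (i) and (ii) is exactly the conclusion of that theorem. So all that has to be done is to verify the hypotheses of Theorem~\ref{thm1} for this choice of $W$, $S$, $U$.

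First I would record that $W=\langle C_p(X)\rangle$ is a subsemigroup of $T(X)$: by the description established just above, $\langle C_p(X)\rangle=\{\alpha\in T(X)\mid\exists\,\overline{x}\in X/\ker\alpha$ with $|\overline{x}|=\aleph_0\}$, which is visibly closed under composition. Since by hypothesis $C_p(X)\subseteq S$ and $S$ is a subsemigroup of $T(X)$, it follows that $W=\langle C_p(X)\rangle\subseteq S$, so indeed $W\leq S\leq T(X)$. Next I would observe that $U\cap W=\emptyset$: this is precisely the Lemma asserting $\langle C_p(X)\rangle\cap(Inj(X)\cap FI(X))=\emptyset$ (whose proof in fact gives the stronger $\langle C_p(X)\rangle\cap FI(X)=\emptyset$). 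Finally, the remaining hypothesis $\langle W,\alpha\rangle=T(X)$ for all $\alpha\in U$ is exactly the preceding Lemma, $\langle C_p(X),\alpha\rangle=T(X)$ for all $\alpha\in FI(X)\cap Inj(X)$, once one notes that $\langle\langle C_p(X)\rangle,\alpha\rangle=\langle C_p(X),\alpha\rangle$.

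Having checked all the hypotheses, we would invoke Theorem~\ref{thm1} to conclude that $S$ is maximal if and only if $S=T(X)\setminus H$ for some $H\in\mathcal{H}(FI(X)\cap Inj(X),\langle C_p(X)\rangle)$. I do not expect any genuine obstacle here: the substantive content has already been supplied by the three lemmas immediately preceding, and the only point requiring slight care is that one must take $W=\langle C_p(X)\rangle$ rather than $C_p(X)$ itself --- $C_p(X)$ need not be a subsemigroup --- which is exactly why the first parameter passed to $\mathcal{H}$ in the statement is the generated subsemigroup $\langle C_p(X)\rangle$, in contrast to the earlier propositions where $FI(X)$ and $IF(X)$ were already subsemigroups by Lemmas~\ref{lmm4} and~\ref{lmm6}.
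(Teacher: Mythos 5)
Your proposal is correct and is exactly the argument the paper intends: the proposition is stated as a direct consequence of Theorem~\ref{thm1} applied with $W=\langle C_{p}(X)\rangle$ and $U=FI(X)\cap Inj(X)$, with the hypotheses supplied by the two preceding lemmas and the description of $\langle C_{p}(X)\rangle$ as the set of maps having an infinite kernel class. Your remarks on $\langle C_{p}(X)\rangle\subseteq S$, on $U\cap W=\emptyset$, and on $\langle\langle C_{p}(X)\rangle,\alpha\rangle=\langle C_{p}(X),\alpha\rangle$ cover precisely the points that need checking.
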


\noindent{\bf Acknowledgments}

We gratefully acknowledge the financial support provided for this
research by Faculty of Science under grant $\sharp$ RGP 2554-10.

\end{document}